\newcommand{\Or}{\mathrm{O}}
\newcommand{\SO}{\mathrm{SO}}
\newcommand{\SL}{\mathrm{SL}}
\newcommand{\C}{\mathbb{C}}
\renewcommand{\H}{\mathbb{H}}
\newcommand{\N}{\mathbb{N}}
\newcommand{\Q}{\mathbb{Q}}
\newcommand{\Z}{\mathbb{Z}}
\newcommand{\R}{\mathbb{R}}
\newcommand{\GL}{\mathrm{GL}}
\newcommand{\Isom}{\mathrm{Isom}}
\newcommand{\disc}{\mathrm{disc}}
\newcommand{\rank}{\mathrm{rank}}
\newcommand{\Ram}{\mathrm{Ram}}
\newcommand{\Br}{\mathrm{Br}}
\newcommand{\Ok}{\mathcal{O}_k}
\newcommand{\kk}{k^{\times}/k^{\times 2}}
\theoremstyle{plain}
\newtheorem{theorem}{Theorem}[section]
\newtheorem{lemma}[theorem]{Lemma}
\newtheorem{corollary}[theorem]{Corollary}
\newtheorem{proposition}[theorem]{Proposition}
\newtheorem{definition}[theorem]{Definition}
\newtheorem{main}{Theorem}
\title[Salem numbers and arithmetic hyperbolic manifolds]{Salem numbers and commensurability classes of arithmetic hyperbolic manifolds}
\author[M. Chu]{Michelle Chu}
\address{School of Mathematics\\ University of Minnesota \\ 
127 Vincent Hall 206 Church St. SE\\ Minneapolis, MN 55455, USA.}
\email{mchu@umn.edu}
\thanks{The first author was partially supported by National Science Foundation grant DMS-2441034}
\author[P. G. P. Murillo]{Plinio G. P. Murillo}
\address{Instituto de Matemática e Estatística\\ Universidade Federal Fluminense \\
Rua Professor Marcos Waldemar de Freitas Reis,  s/n, Campus do Gragoatá, São Domingos, Niterói, RJ.
24210-201, Brazil.}
\email{pliniom@id.uff.br}
\thanks{The second author was partially supported by Universal CNPq Grant 408834/2023-4}
\begin{document}

\begin{abstract}
We prove that for any Salem number $\lambda$, totally real number field $k\subseteq\Q(\lambda+\lambda^{-1})$, and positive integer $n\geq\deg_k(\lambda)-1$, there exist infinitely many pairwise incommensurable arithmetic hyperbolic $n$-manifolds defined over $k$ which contain a geodesic of length $\log\lambda$. Additionally, we characterize the rational length spectrum of arithmetic hyperbolic $n$-manifolds of simplest type in terms of the local arithmetic behavior of the associated quadratic form. 
\end{abstract}

\maketitle

\section{Introduction}\label{s:intro}
A \emph{Salem number} is a real algebraic integer $\lambda>1$ with the property that all its Galois conjugates except $\lambda^{-1}$ have norm 1.  A polynomial $f\in\Z[x]$ is a \emph{Salem polynomial} if it is the minimal polynomial of a Salem number. Besides the two real roots $\lambda$ and $\lambda^{-1}$ of a Salem polynomial, the remaining roots occur in reciprocal complex-conjugate pairs $e^{i\theta}$ and $e^{-i\theta}$. Salem polynomials are therefore monic of even degree and self-reciprocal, that is $f(x)=x^{\deg (f)}f(x^{-1})$.
Although Salem numbers are sometimes defined to have at least one pair of non-real conjugates, in this article we allow Salem numbers to be quadratic.

Salem numbers appear in many areas of mathematics. They are closely related to lengths of geodesics in arithmetic hyperbolic manifolds of simplest type. These manifolds are commensurable with quotients of hyperbolic space $\H^n$ by some $\SO(q)_{\Ok}$, the $k$-integer points preserving an admissible quadratic form defined over a totally real number field $k$. If $\ell(\gamma)$ is the translation length of a loxodromic element $\gamma$ in such a $\SO(q)_{\Ok}$, then $\lambda=e^{\ell(\gamma)}$ is a Salem number. In 2019, Emery, Ratcliffe, and Tschantz established a more direct relationship between Salem numbers and lengths of geodesics in hyperbolic manifolds which are arithmetic of simplest type.

\begin{theorem}\cite{EmeryRatcliffeTschantz} \label{ERT conditions}
    Let $\Gamma\subseteq\Isom(\H^n)$ be an arithmetic lattice of simplest type defined over a totally real number field $k$. Let $\gamma$ be a loxodromic element of $\Gamma$, let $\ell(\gamma)$ be the translation length of $\gamma,$ and let $\lambda=e^{\ell(\gamma)}$. If $n$ is even or $\gamma\in \Gamma^{(2)}$ then 
    \begin{enumerate}
        \item $\lambda$ is a Salem number and $\deg_{k}(\lambda)\leq n+1$
        \item $k$ is a subfield of $\Q(\lambda +\lambda^{-1})$, and $[k:\Q(\lambda+\lambda^{-1})]=\deg_k(\gamma)/2$. 
    \end{enumerate}
\end{theorem}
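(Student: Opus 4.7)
The plan is to analyze the characteristic polynomial $\chi_\gamma \in \Ok[x]$ of $\gamma$ by exploiting both the orthogonality of $\gamma$ and the arithmetic structure of $\Gamma$. After replacing $\Gamma$ by a commensurable subgroup, I may assume $\gamma \in \Or(f,\Ok)$ for a quadratic form $f$ over $k$ of signature $(n,1)$ with $f^\sigma$ positive definite at every non-identity real embedding $\sigma\colon k \to \R$. A real normal-form argument shows the eigenvalues of $\gamma$ are $\lambda,\lambda^{-1}$ together with $n-1$ numbers of modulus one, and since the Gram matrix $B$ of $f$ satisfies $\gamma^{-1} = B^{-1}\gamma^{t}B$, the polynomial $\chi_\gamma$ is self-reciprocal up to sign; in particular, $\lambda^{-1}$ is also an eigenvalue of $\gamma$.

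For part (1), given $\tau \in \mathrm{Gal}(\bar\Q/\Q)$ I would set $\sigma := \tau|_k$, which is automatically a real embedding since $k$ is totally real. If $\sigma = \Id$, then $\tau(\lambda)$ is a root of $\chi_\gamma$, hence lies in $\{\lambda,\lambda^{-1}\}$ or on the unit circle. If $\sigma \neq \Id$, then $\sigma(\gamma) \in \Or(f^\sigma,\R)$ is orthogonal with respect to a positive-definite form and therefore has all eigenvalues on the unit circle; since $\tau(\lambda)$ is a root of $\sigma(p)$, where $p \in k[x]$ is the minimal polynomial of $\lambda$ over $k$, and $\sigma(p) \mid \chi_{\sigma(\gamma)}$, we conclude $|\tau(\lambda)| = 1$. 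Thus $\lambda$ is a Salem number, and $\deg_k(\lambda) = \deg p \leq \deg\chi_\gamma = n+1$.

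For the inclusion $k \subseteq \Q(\lambda+\lambda^{-1})$ in part (2), I would use the Galois correspondence by checking that every $\tau \in \mathrm{Gal}(\bar\Q/\Q)$ fixing $\lambda+\lambda^{-1}$ also fixes $k$. Such a $\tau$ must send $\lambda$ to a root of $x^2-(\lambda+\lambda^{-1})x+1$, so $\tau(\lambda) \in \{\lambda,\lambda^{-1}\}$ and hence $|\tau(\lambda)| \neq 1$; by the previous paragraph this forces $\tau|_k = \Id$.

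The main obstacle is the degree formula, which reduces to showing that $p$ is self-reciprocal. Arguing by contradiction, suppose $p \neq p^{*}$. Then $p$ cannot have a root $\mu \in \{\pm 1\}$ (otherwise $\mu \in \Q \subseteq k$ would force $p = x-\mu$, contradicting $p(\lambda)=0$), nor any non-real root $\mu$ on the unit circle (its complex conjugate $\bar\mu = \mu^{-1}$ would be a common root of $p$ and $p^{*}$, forcing $p = p^{*}$ by irreducibility). Therefore $\lambda$ is the only root of $p$, so $p = x-\lambda$ and $\lambda \in k$; but combined with the inclusion $k \subseteq \Q(\lambda+\lambda^{-1})$ already shown, this gives $k = \Q(\lambda+\lambda^{-1})$ and hence $\deg_\Q(\lambda) = 2[k:\Q]$, contradicting the divisibility $\deg_\Q(\lambda) \mid [k:\Q]$ entailed by $\lambda \in k$. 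With $p$ self-reciprocal, the swap $\lambda \leftrightarrow \lambda^{-1}$ extends to a $k$-automorphism of $k(\lambda)$ that fixes $s := \lambda+\lambda^{-1}$, yielding $[k(\lambda):k(s)] = 2$, whence $[\Q(\lambda+\lambda^{-1}):k] = [k(s):k] = \deg_k(\lambda)/2$.
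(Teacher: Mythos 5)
The paper does not prove this statement --- it is quoted verbatim from \cite[Theorem~5.2]{EmeryRatcliffeTschantz} and used as a black box --- so there is no in-paper argument to compare against. Your reconstruction follows the standard route and is essentially sound: splitting $\tau \in \mathrm{Gal}(\bar\Q/\Q)$ according to whether $\tau|_k$ is the identity or a non-identity real embedding, and using positive-definiteness of $f^\sigma$ at the conjugate places to force unit modulus, is exactly the mechanism that makes such rigidity results work; the contradiction argument showing $p=p^*$ is also clean, and the Galois-correspondence proof of $k\subseteq\Q(\lambda+\lambda^{-1})$ is the right idea.

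Two details deserve tightening. First, in part (1) you pass from ``every $\Q$-conjugate of $\lambda$ lies in $\{\lambda,\lambda^{-1}\}$ or on the unit circle'' directly to ``$\lambda$ is a Salem number,'' but the definition requires $\lambda^{-1}$ to actually occur among the $\Q$-conjugates (and $\lambda\notin\Q$). This is not automatic from what you wrote in that paragraph: you need to observe that $\lambda^{-1}$, being a root of the monic $\chi_\gamma\in\Ok[x]$, is an algebraic integer, so $\lambda$ is a unit; if $\lambda^{-1}$ were not a $\Q$-conjugate, the constant term of the minimal polynomial $f_\lambda$ over $\Q$ would have absolute value $\lambda>1$, contradicting $|f_\lambda(0)|=1$. (Your later self-reciprocality of $p$ also produces a $\Q$-automorphism sending $\lambda\mapsto\lambda^{-1}$, but that comes logically after the Salem claim as you have ordered things.) Second, the opening reduction ``I may assume $\gamma\in\Or(f,\Ok)$'' should be phrased as conjugating $\gamma$ rather than passing to a commensurable subgroup, since a smaller subgroup need not contain $\gamma$: because $\Gamma\subseteq\SO(q)_k$ is finitely generated and commensurable with $\SO(q)_{\Ok}$, it stabilizes a full-rank $\Ok$-lattice in $k^{n+1}$, and a change of basis places $\gamma$ in $\Or(q',\Ok)$ for some $q'\simeq q$; this is exactly the content of \Cref{lem:ktoRk}.
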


This result can be viewed as giving necessary conditions on members of the length spectrum. 
The (weak) \emph{length spectrum} of a lattice $\Gamma\subseteq\Isom(\H^n)$ is the set
\[ \mathcal{L}(\Gamma)=\{ \ell(\gamma) \mid \gamma \text{ is a loxodromic element in } \Gamma \}. \]
The length spectrum of a hyperbolic manifold encodes deep geometric and arithmetic information of the manifold. 
If $\lambda$ is a Salem number such that $\log\lambda\in\mathcal{L}(\Gamma)$, we say that $\Gamma$ \emph{realizes} $\lambda$.

We instead consider the \emph{rational length spectrum} of a lattice $\Gamma\subseteq\Isom(\H^n)$
\[ \Q\mathcal{L}(\Gamma)=\{s\cdot\ell(\gamma) \mid s\in\Q \text{ and } \gamma \text{ is a loxodromic element in } \Gamma \} \]
and seek to understand when $\log\lambda\in\Q\mathcal{L}(\Gamma)$.
It is easy to see that two commensurable lattices have the same rational length spectrum. On the other hand, two arithmetic lattices $\Gamma_1,\Gamma_2\subseteq\Isom(\H^n)$  with $\Q\mathcal{L}(\Gamma_1)=\Q\mathcal{L}(\Gamma_2)$ are necessarily commensurable whenever $n\not\equiv 1\bmod4$ (\cite{ReidLengths,CHLR,PrasadRapinchuk,Garibaldi}). Prasad and Rapinchuk constructed examples of incommensurable pairs of arithmetic hyperbolic manifolds in each dimension $n\equiv1\bmod4$, $n\geq5$ having the same rational length spectrum \cite{PrasadRapinchuk}.

Emery, Ratcliffe, and Tschantz also gave a converse to \Cref{ERT conditions}.

\begin{theorem}\cite{EmeryRatcliffeTschantz}
    Let $\lambda$ be a Salem number, let $k$ be a subfield of $\Q(\lambda+\lambda^{-1})$, and let $n\geq 2$ be an integer with $\deg_k(\lambda)\leq n+1$. Then there exists an arithmetic lattice of simplest type $\Gamma\subseteq\Isom(\H^n)$ defined over $k$ that realizes $\lambda$.
\end{theorem}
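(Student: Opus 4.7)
The plan is to construct the lattice $\Gamma$ as the $\Ok$-points of the orthogonal group of a suitable quadratic form of signature $(n,1)$ over $k$, and to exhibit the hyperbolic isometry $\gamma$ as a block-diagonal matrix built from the companion matrix of the minimal polynomial of $\lambda$ over $k$. First I would let $p(x)\in\Ok[x]$ denote this minimal polynomial, of degree $d=\deg_k(\lambda)$. Because $k\subseteq\Q(\lambda+\lambda^{-1})$, one checks that $\lambda^{-1}$ is $k$-conjugate to $\lambda$, so the roots of $p$ come in reciprocal pairs; consequently $p$ is self-reciprocal and $\det(C)=1$ for the companion matrix $C\in\SL_d(\Ok)$ (with $d$ even outside the quadratic case). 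Via the identification $V=k^d\cong k(\lambda)$ under which $C$ becomes multiplication by $\lambda$, the space of $C$-invariant symmetric $k$-bilinear forms is parameterized by $\alpha$ in the totally real subfield $F=\Q(\lambda+\lambda^{-1})$ via
\[ B_\alpha(u,v)=\mathrm{Tr}_{k(\lambda)/k}(\alpha u\bar v), \]
where $\bar{\phantom{x}}$ denotes the involution $\lambda\mapsto\lambda^{-1}$ of $k(\lambda)$, whose fixed field is $F$.

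Next I would compute signatures. At the identity embedding $\sigma_0\colon k\hookrightarrow\R$, the companion matrix $C$ has two real eigenvalues $\lambda,\lambda^{-1}$ spanning a $2$-plane on which every $B_\alpha$ is hyperbolic of signature $(1,1)$, together with $d-2$ complex eigenvalues $e^{\pm i\theta_j}$ spanning $C$-invariant $2$-planes on which $B_\alpha$ is definite. At every other real embedding $\sigma$ of $k$, the image $\sigma(\lambda+\lambda^{-1})$ lies in $(-2,2)$, so all roots of $\sigma(p)$ lie on the unit circle and $\sigma(B_\alpha)$ decomposes into definite $2$-planes. Using weak approximation in the totally real field $F$, I would choose $\alpha\in F^\times$ so that $B_\alpha$ has signature $(d-1,1)$ at $\sigma_0$ and is positive definite at every other real embedding of $k$. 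Setting $q=B_\alpha\oplus I_{n+1-d}$ on $W=k^{n+1}$ (and $q=B_\alpha$ if $n+1=d$), with $\tilde C=C\oplus\Id\in\SL_{n+1}(\Ok)$, the form $q$ then has signature $(n,1)$ at $\sigma_0$ and is positive definite at every other real embedding of $k$. The group $\Or(q,\Ok)$ is therefore a classical arithmetic lattice $\Gamma\subseteq\Isom(\H^n)$ defined over $k$, and $\gamma=\tilde C$ is an orientation-preserving hyperbolic element whose axis in $\H^n$ descends from the $k$-rational $2$-plane spanned by the $\lambda,\lambda^{-1}$-eigenvectors and whose translation length along that axis is $\log\lambda$.

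The main obstacle is the simultaneous signature adjustment: producing a single $\alpha\in F^\times$ for which $B_\alpha$ is of signature $(d-1,1)$ at the identity embedding of $k$ \emph{and} positive definite at every other real embedding. Each real embedding of $k\subseteq F$ typically has more than one extension to $F$, and the signature contribution on each rotation $2$-plane depends nontrivially on the signs of $\alpha$ under those extensions. The argument therefore requires a careful matching between the real places of $F$ above a given real place of $k$ and the individual definite $2$-planes of $\sigma(B_\alpha)$, after which weak approximation in $F$ produces a valid $\alpha$.
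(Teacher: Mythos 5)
Your construction via the twisted trace form $B_\alpha(u,v)=\mathrm{Tr}_{K/k}(\alpha u\bar v)$ on $K=k(\lambda)$ is essentially the approach of Emery--Ratcliffe--Tschantz (and is the classical way to produce a $k$-rational orthogonal group containing the companion matrix), so the outline matches the cited proof. Two remarks. First, the concern you flag at the end --- that simultaneously matching the $(d-1,1)$ signature at $\nu_0$ and positive definiteness at the other real places might require a delicate weak-approximation argument --- is not actually an obstruction. On the $2$-plane of $K\otimes_{k,\nu_0}\R$ coming from the split pair $\lambda,\lambda^{-1}$ the trace pairing is hyperbolic of signature $(1,1)$ for \emph{every} nonzero value of $\alpha$ at the identity place of $E$, and on each remaining $2$-plane (a copy of $\C$ lying over a non-identity real place of $E$) the form is definite with sign equal to the sign of $\alpha$ at that place; so requiring $\alpha$ to be positive at every non-identity real place of $E=\Q(\lambda+\lambda^{-1})$ already gives $(d-1,1)$ at $\nu_0$ and $(d,0)$ at the other real places of $k$. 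In particular $\alpha=1$ works, with no approximation step needed. Second, you should make explicit the routine but necessary checks that $\det(\tilde C)=(-1)^{d}p(0)=1$ (since $p$ is reciprocal of even degree), which is what guarantees $\tilde C\in\SO(q)_{\Ok}$ and hence that $\gamma=\tilde C$ is orientation-preserving, and that $\tilde C$ preserves $\Ok^{n+1}$ because $\lambda$ is integral over $\Ok$ so the basis $1,\lambda,\dots,\lambda^{d-1}$ spans a $C$-stable $\Ok$-lattice.
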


For each Salem number $\lambda$ and number field $k\subseteq\Q(\lambda+\lambda^{-1})$, the arguments in \cite{EmeryRatcliffeTschantz} construct a single arithmetic lattice $\Gamma$ defined over $k$ in each allowable dimension $n$ that realize $\lambda$. The goal of this article is to construct infinitely many pairwise incommensurable arithmetic lattices defined over $k$ in each allowable dimension $n$ that realize $\lambda$.

\begin{main} \label{main theorem}
    Let $\lambda$ be a Salem number, let $k$ be a subfield of $\Q(\lambda+\lambda^{-1})$, and let $n\geq 2$ be an integer with $\deg_k(\lambda)\leq n+1$. Then there exist infinitely many commensurability classes of arithmetic lattices in $\Isom(\H^n)$ defined over $k$ which realize $\lambda$.
\end{main}

We remark that the case of $n=2$ and $k=\Q(\lambda+\lambda^{-1})$ was already known and a proof can be found in \cite[Corollary 12.2.10]{MRBook}. 

An important ingredient at the heart of the realization of $\lambda$ in these arithmetic lattices is a Hasse principle result of Bayer-Fluckiger \cite{Bayer-Fluckinger} that gives criteria for the existence of isometries with prescribed eigenvalues in a quadratic space in terms of the local behavior of both the characteristic polynomial and the quadratic form. We interpret these results in our setting to state a finite set of necessary and sufficient conditions for the realization of $\lambda$ in \Cref{thm conditions}. As a corollary we obtain a description of the elements in $\Q\mathcal{L}(\Gamma)$. To distinguish between commensurability classes of these arithmetic lattices, we use Maclachlan's parametrization of commensurability classes of arithmetic lattices of simplest type in $\Isom(\H^n)$ in terms of the local behavior of the Hasse invariants of the defining quadratic forms \cite{Mac11}. 

The idea of the argument is to use Class Field Theory to construct an infinite set of prime ideals in $k$ that satisfy some explicit splitting behavior in certain field extensions of $k$, and then use distinct finite subsets to construct infinitely many arithmetic lattices defined from quadratic forms over $k$ with arithmetic data determined by the finite set of prime ideals. The prescribed local behavior is specifically chosen with two goals in mind: to guarantee the realization of $\lambda$ by these arithmetic lattices and to guarantee pairwise incommensurability between lattices constructed from distinct subsets of prime ideals. 

The article is organized as follows. In \Cref{s:background} we present the necessary background material on quadratic forms, quaternion algebras, loxodromic isometries, and arithmetic lattices. We also recall Maclachlan's parametrization and state an easy consequence particularly suited for our purposes.
In \Cref{s:realizing} we gather the criteria given by Bayer-Fluckiger which will be necessary in our case. In \Cref{sec:criteria} we give in \Cref{thm conditions} an explicit set of finite conditions on a commensurability class of arithmetic lattices of simplest type in $\Isom(\H^n)$ to realize a Salem number $\lambda$. Finally, we prove \Cref{main theorem} in \Cref{s:infcommclasses}. 

\section{Preliminaries}\label{s:background}

\subsection{Quadratic forms over number fields}
Let $k$ be a field. A \emph{quadratic form} over $k$ is a homogeneous polynomial in $r$ variables of degree 2 with coefficients in $k$. It can be written as 
\[ q(x)=\sum_{i=1}^r \sum_{j=i}^r a_{ij}x_i x_j = x^t S_q x, \] where $S_q$ is the symmetric matrix
    \[ \begin{pmatrix}
    a_{11} & \frac{1}{2}a_{12} & \cdots & \frac{1}{2}a_{1r} \\
    a_{12} & a_{22} & & \vdots \\
    \vdots &  & \ddots & \frac{1}{2}a_{r-1,r} \\
    \frac{1}{2}a_{1r} & \cdots & \frac{1}{2}a_{r-1,r} & a_{rr}
    \end{pmatrix} \]
with diagonal entries $(S_q)_{ii}=a_{ii}$ and off-diagonal entries $(S_q)_{ij}=\frac{1}{2}a_{ij}$. Let $V=k^r$, then the \emph{quadratic space} $(V,q)$ has isometry group given by the orthogonal group 
\[ \Or(q)=\Or(q)_k=\{ A \in \GL(r)_k \:|\: A^tS_qA=S_q \}. \]

The \emph{rank} of $q$ is the number of variables $\rank(q)=r$. The \emph{determinant} of $q$ is $\det(q)=\det(S_q)$. We will assume that $q$ is nondegenerate, that is, $S_q$ is invertible, and $\det(q)\not=0$.
In what follows, we will only care about quadratic forms up to $k^\times$-scaling. 
If $r$ is the rank of $q$, the \emph{discriminant} of $q$ is $\disc(q)=(-1)^{r(r-1)/2}\det(q)$ as an element of $\kk$.

We say that a quadratic form $q$ \emph{represents} $a\in k$ if there is a vector $v\in K^n$ satisfying $q(v)=a$. A quadratic form is \emph{isotropic} if it represents 0 by a nonzero vector and is \emph{anisotropic} otherwise.

Two quadratic forms $q$ and $q'$ over $k$ are \emph{$k$-equivalent} if there exists a matrix $M\in\GL(r)_k$ such that $M^t S_q M= S_{q'}$. When the field $k$ is clear, we write $q\simeq q'$. Two quadratic forms $q$ and $q'$ over $k$ are \emph{$k$-similar} if there is some $a\in k^\times$ such that $q\simeq a q'$.
Any quadratic form over $k$ is $k$-equivalent to a diagonal quadratic form $q'$, which means $q'(x)=\sum_{i=1}^r a_ix_i^2$ and we write $q'=\langle a_1,\dots,a_r \rangle$.

Over the reals, any nondegenerate quadratic form is $\R$-equivalent to a diagonal form with coefficients in $\pm1$. The \emph{signature} of $q$ is the pair $(n_+, n_-)$ where $n_+$ is the number of $+1$'s and $n_-$ is the number of $-1$'s. 

Let $k$ be a number field and $\Ok$ its ring of integers.
We denote by $\Omega$ the set of all places $\nu$ of $k$. The subset of infinite places and finite places will be denoted by $\Omega_\infty$ and $\Omega_{fin}$ respectively. Since the set of finite places of $k$ is in 1-to-1 correspondence with the set of prime ideals in $\Ok$, we will abuse notation and often write $\mathfrak{p}$ to refer either to prime ideals or to their corresponding finite places. 

If $\nu$ is a place of $k$, let $k_\nu$ denote the completion corresponding to $\nu$ and let $q_\nu$ denote the quadratic form over $k_\nu$ obtained from $q$ by $\nu$. 
If $q$ is defined over a real number field $k\hookrightarrow \R$, we refer to the signature with respect to the given embedding as the \emph{signature} of $q$ over $k$.

\subsection{Quaternion algebras and invariants of quadratic forms}

A \emph{quaternion algebra} over a field $k$ is a four-dimensional central simple algebra. Any quaternion algebra $B$ can be represented by a Hilbert symbol $(\frac{a,b}{k})$ for some $a,b\in k^\times$ where $B$ has a basis $\{1,i,j,ij\}$ with $a,b\in k^\times$, $i^2=a$, $j^2=b$, and $ij=-ji$.

Any quaternion algebra is either isomorphic to the matrix algebra $M_2(k)$ or to a division algebra. Over $\R$ or over any local field, there is a unique isomorphism class of division quaternion algebras.
Furthermore, $B=(\frac{a,b}{k})\cong M_2(k)$ if and only if the quadratic form $ax^2+by^2-z^2$ is isotropic.

Given $B$ a quaternion algebra over a number field $k$ and $\nu$ a place of $k$, let $B_\nu=B\otimes_k k_\nu$, a quaternion algebra over $k_\nu$. 
A quaternion algebra $B$ over $k$ is said to be \emph{split} at a place $\nu$ if $B_\nu\cong M_2(k_\nu)$ and is otherwise said to be \emph{ramified} at $\nu$ if $B_\nu$ is a division algebra. 

The ramification set of the quaternion algebra $B$ is denoted
    \[ \Ram(B) =\{\nu\in\Omega \: | \: B \text{ is ramified at }\nu  \}. \]
The subset of infinite ramified places will be denoted by $\Ram_\infty(B)$. The subset of finite ramified places will be denoted $\Ram_{fin}(B)$. A Hilbert symbol of a quaternion algebra is not unique. However, quaternion algebras are classified by their ramification sets according to the following theorem (see e.g. \cite[Theorem 7.3.6]{MRBook}).

\begin{theorem}[Classification of quaternion algebras]
    Let $B$ be a quaternion algebra over a number field $k$.
    The following hold:
    \begin{itemize}
        \item $\Ram(B)$ is a finite set of even cardinality.
        \item Two quaternion algebras $B_1$ and $B_2$ defined over $k$ are isomorphic if and only if $\Ram(B_1)=\Ram(B_2)$.
        \item If $S$ is any finite subset of $\Omega$ of even cardinality, excluding any complex places, then there exists a quaternion algebra $B$ over $k$ such that $\Ram(B)=S$.
    \end{itemize}
\end{theorem}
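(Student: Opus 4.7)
The plan is to recast the theorem in terms of the Brauer group: quaternion algebras over $k$ (up to isomorphism) correspond to elements of order dividing $2$ in $\Br(k)$, and the local invariants at $\nu$ correspond to the image in $\Br(k_\nu)$. Since $\Br(k_\nu)$ is $\Z/2$ at real ramified places and contains a unique nontrivial $2$-torsion element at each finite place (and is trivial at complex places), the $\Ram$ set is exactly the support of the corresponding class in $\bigoplus_\nu \Br(k_\nu)$. With this dictionary, the three bullets become the three main facts of class field theory for $\Br(k)$.

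For the first bullet, I would pick any Hilbert symbol presentation $B=(\frac{a,b}{k})$ and show that for every prime $\mathfrak{p}$ not dividing $2ab\Ok$, the local Hilbert symbol $(a,b)_\mathfrak{p}$ is trivial (using that units are squares in unramified odd residue characteristic after adjusting), giving finiteness. For the even-cardinality statement I would invoke Hilbert reciprocity, $\prod_\nu (a,b)_\nu = 1$ in $\{\pm 1\}$, so the number of $\nu$ at which the symbol is $-1$ is even.

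For the second bullet I would apply the Albert--Brauer--Hasse--Noether theorem, which asserts that the map $\Br(k)\to\bigoplus_\nu \Br(k_\nu)$ is injective. Thus two quaternion algebras with the same local invariants (equivalently, the same $\Ram$ set) are isomorphic. Together with the existence of unique local division quaternion algebras, this gives the claimed classification.

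For the third bullet, the realization part, I would fix a finite set $S\subset\Omega$ of even cardinality excluding complex places and construct $a,b\in k^\times$ with the desired local Hilbert symbols. The cleanest route is via the Hasse--Brauer--Noether sequence $0\to \Br(k)\to \bigoplus_\nu \Br(k_\nu)\to \Q/\Z\to 0$: any tuple of local invariants in $\frac{1}{2}\Z/\Z$ summing to $0$ lifts to a global class. Alternatively one can argue directly: choose $a\in k^\times$ so that $k(\sqrt{a})/k$ is nonsplit at the places in $S$, and then use a Chebotarev or Chinese-remainder-plus-reciprocity argument to find $b$ prescribing the local symbols at $S$ (the last coordinate is automatic from reciprocity). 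This existence step is the main obstacle, because one must simultaneously control the symbol at every place in $S$ while not introducing extra ramification; in practice the cleanest proof simply cites the surjectivity half of Hasse--Brauer--Noether rather than constructing $a,b$ by hand.
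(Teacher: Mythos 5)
The paper does not prove this statement; it cites it directly as a standard result, with the pointer ``see e.g.\ \cite[Theorem 7.3.6]{MRBook}'' (Maclachlan--Reid). So there is no paper proof to compare against --- this is background material. Your class-field-theoretic sketch is the standard argument and is essentially the one given in the cited source: finiteness by local computation of Hilbert symbols away from $2ab$, evenness by Hilbert reciprocity $\prod_\nu(a,b)_\nu=1$, injectivity of the classification map by Albert--Brauer--Hasse--Noether, and realization by the surjectivity half of the fundamental exact sequence $0\to \Br(k)\to\bigoplus_\nu\Br(k_\nu)\to\Q/\Z\to 0$. The outline is correct. One point worth being explicit about, which you glide past in the opening paragraph: the identification of quaternion algebras (up to isomorphism) with the full $2$-torsion of $\Br(k)$ already uses the number-field fact that exponent equals index, so a Brauer class of order $2$ is represented by a division algebra of degree $2$ (a quaternion algebra) and not something larger. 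That ingredient is what makes the surjectivity step yield a genuine quaternion algebra rather than merely a central simple algebra of $2$-torsion class, and it belongs in the proof you sketch. Modulo that clarification, your approach is correct and standard.
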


Isomorphism classes of quaternion algebras over $k$ form a subgroup of the Brauer group $\Br(k)$, where every element has order 2. The isomorphism class of the quaternion algebra with empty ramification set corresponds to the identity element in $\Br(k)$. In $\Br(k)$, multiplication of quaternion algebras is given by $B_1\cdot B_2=B_3$ where
\[
    \Ram(B_3) = \left(  \Ram(B_1) \cup  \Ram(B_2) \right) - \left(  \Ram(B_1) \cap  \Ram(B_2) \right) .
\]

Let $(V,q)$ be a quadratic space over a number field $k$ with diagonalization $\langle a_1,\dots a_r\rangle$. The \emph{Hasse invariant} of $(V,q)$ is the product
\[ s(q)=\prod_{i<j} \left( \tfrac{a_i,a_j}{k} \right) \in \Br(k), \]
which is independent of the diagonalization, and is invariant in the $k$-equivalence class. This invariant is closely related to the \emph{Witt invariant} $c(q)$, which can be obtained as follows:
\begin{equation}\label{eq:WittHasse}
\begin{aligned}
    \text{if }\rank(q)\equiv 1,2\bmod8, \: & \: c(q) = s(q)  \\
    \text{if }\rank(q)\equiv 3,4\bmod8, \: & \: c(q) = s(q)\cdot \left( \tfrac{-1, -\det(q)}{k} \right) \\
    \text{if }\rank(q)\equiv 5,6\bmod8, \: & \: c(q) = s(q)\cdot \left( \tfrac{-1,-1}{k} \right) \\
    \text{if }\rank(q)\equiv 7,8\bmod8, \: & \: c(q) = s(q)\cdot \left( \tfrac{-1, \det(q)}{k} \right)  .
\end{aligned}
\end{equation}

\subsection{Hyperbolic space and loxodromic isometries} 
\label{ss:hyperbolic isometries}

The hyperbolic $n$-space $\H^n$ is the simply connected Riemannian $n$-manifold in which the sectional curvature is constant $-1$. We consider the hyperboloid model of hyperbolic space defined as follows.

Let $n \geq 2$ and $q=\langle 1,\dots , 1,-1\rangle$ be the standard quadratic form of signature $(n,1)$. The bilinear form $B$ in $\R^{n+1}$ associated with $q$ is given by
\[ B(x,y) = \frac{q(x+y)-q(x)-q(y)}{2}. \]
The set \[ \mathcal{S}=\{(x_1,\ldots,x_{n+1}) \in \R^{n+1}; q(x_1,\ldots,x_{n+1})=-1 \} \] forms a two-sheeted hyperboloid. We denote the positive sheet by \[\mathcal{S}^+=\{(x_1,\ldots,x_{n+1}) \in \mathcal{S}; x_{n+1}>0 \}.\] 
The \emph{hyperboloid model} of the hyperbolic $n$-space is obtained by identifying $\H^n$ with the $n$-manifold $\mathcal{S}^+$ together with the Riemannian metric given by $d(x,y)=\arccos\left(B(x,y)\right)$. 

The orthogonal group $\Or(n,1)_\R$ preserves the hyperboloid $\mathcal{S}$. 
We identify the isometry group of $\H^n$ with 
\[ \Isom(\H^n)=\mathrm{PO}(n,1)_\R\cong \SO(n,1)_\R . \]
It has two connected components. The identity component consists of orientation-preserving isometries. The other component consists of orientation-reversing isometries.

We note that one may construct isometric hyperboloid models for $\H^n$ using any quadratic form $q'$ with signature $(n,1)$ defined over a real field. Since any such $q'$ is $\R$-equivalent to the standard form $q=\langle 1,\dots , 1,-1\rangle$, there is some $g\in\GL(n+1)_\R$ such that $g^t S_q g=S_{q'}$ and $g^{-1}\SO(n,1)_\R g= \SO(q)_\R$.

An element $\gamma\in \Isom(\H^n)$ is called \emph{loxodromic} if there is a unique geodesic $L\in\H^n$, called the \emph{axis} of $\gamma$, on which $\gamma$ acts as a translation by a positive distance $\ell(\gamma)$, called the \emph{translation length} of $\gamma$. In this case, $\gamma$ has exactly $n-1$ eigenvalues of norm one, and two real eigenvalues $\lambda=e^{\ell(\gamma)}>1$ and $\lambda^{-1}=e^{-\ell(\gamma)}<1$ not on the unit circle (see \cite{ChenGreenberg}).

A \emph{lattice} is a discrete subgroup $\Gamma<\Isom(\H^n)$ such that the quotient $\H^n/\Gamma$ has finite volume. Whenever $\Gamma$ is a lattice, the quotient $\H^n/\Gamma$ is a complete finite-volume hyperbolic orbifold. In this case, most elements of $\Gamma$ are loxodromic and the axes of loxodromic elements map to closed geodesics in the quotient. Also, $\H^n/\Gamma$ will be a manifold whenever $\Gamma$ is torsion free. 

Two lattices $\Gamma_1$ and $\Gamma_2$ in $\Isom(\H^n)$ are \emph{commensurable} if there exists $g\in\Isom(\H^n)$ such that $\Gamma_1\cap g\Gamma_2g^{-1}$ has finite index in both $\Gamma_1$ and $g\Gamma_2g^{-1}$.

\subsection{Arithmetic lattices}

Let $k$ be a totally real number field together with a chosen real embedding $\nu_0:k\hookrightarrow\R$. Consider a quadratic form $q:\R^{n+1} \rightarrow \R$ defined over $k$, with signature $(n,1)$ with respect to the $\nu_0$.

We say that such a form $q$ is \emph{admissible} if for any non-identity real place $\nu$ of $k$, $q_\nu$ has signature $(n,0)$ (in other words, $q_\nu$ is positive definite). This condition on the Galois conjugates of $q$ guarantees that the subgroup
\[ \SO(q)_{\Ok} = \{ A \in \SL(n+1)_{\Ok}; A^tS_qA=S_q\} \]
is a discrete subgroup of $\SO(q)_\R \cong \Isom(\H^n)$. Furthermore, $\SO(q)_{\Ok}$ is a lattice in $\Isom(\H^n)$.

Any lattice $\Gamma<\Isom(\H^n)$ commensurable with some $\SO(q)_{\Ok}$ for an admissible $q$ defined over a totally real number field $k$ is called \emph{arithmetic of simplest type}. The quotient orbifold $\H^n/\Gamma$ is also called arithmetic of simplest type.

When $n$ is even, every arithmetic lattice in $\Isom(\H^n)$ is of simplest type and contained in $\SO(q)_k$ for some admissible $q$ defined over a totally real number field $k$ (see e.g. \cite[Lemma 4.2]{EmeryRatcliffeTschantz}). When $n$ is odd, there exist arithmetic lattices that are not of simplest type; however, if $\Gamma$ is arithmetic of simplest type, then $\Gamma^{(2)}$, the subgroup generated by the squares of the elements in $\Gamma$, is contained in $\SO(q)_k$ for some admissible $q$ defined over a totally real number field $k$ (see \cite[Lemma 4.5]{EmeryRatcliffeTschantz}).

The commensurability classes of these discrete subgroups of $\Isom(\H^n)$ are in one-to-one correspondence with similarity classes of admissible quadratic forms $q$ over totally real number fields $k$.
In \cite{Mac11}, Maclachlan classified the commensurability classes of arithmetic lattices of simplest type in terms of the Witt invariant of quadratic forms. The following statement is a direct consequence of the proofs of \cite[Theorems 7.2 and 7.4]{Mac11}.

\begin{proposition}\cite{Mac11} \label{Maclachlan}
    Let $\Gamma=\SO(q)_{\Ok}$ and $\Gamma'=\SO(q')_{\Ok}$ be arithmetic lattices of the simplest type in $\Isom(\H^n)$, where $q$ and $q'$ are two admissible quadratic forms defined over a totally real number real field $k$.

    If $n$ is even, then $\Gamma$ and $\Gamma'$ are commensurable if and only if $c(q)=c(q')$.

    If $n$ is odd, let $\delta=\disc(q)$ and $\delta'=\disc(q')$.
    Then $\Gamma$ and $\Gamma'$ are commensurable if and only if $\delta=\delta'$ and $c(q)\otimes_k k(\sqrt{\delta})=c(q')\otimes_k k(\sqrt{\delta})$ in $\Br\left(k(\sqrt{\delta})\right)$.
\end{proposition}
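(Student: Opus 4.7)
The strategy has two stages. In the first, I reduce commensurability of the lattices to the existence of $\alpha \in k^\times$, totally positive, such that $q'$ is $k$-equivalent to $\alpha q$. Two facts drive this reduction: (i) $\SO(q) = \SO(\alpha q)$ literally as algebraic subgroups of $\GL(n+1)$, so $k$-similar forms yield the same lattices of $\Ok$-points; and (ii) conversely, by Margulis' commensurator theorem, any two commensurable arithmetic lattices arise from $k$-isomorphic semisimple $k$-algebraic groups, and for orthogonal groups of forms of rank $n+1$ with matching local signatures this $k$-isomorphism is equivalent to projective $k$-equivalence of the forms. The admissibility of $q'$ forces $\alpha$ to be totally positive (after possibly absorbing a global sign into the form), since otherwise the signatures of $\alpha q$ at the real places of $k$ would not match those of $q'$.

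In the second stage, I invoke Hasse--Minkowski: two nondegenerate quadratic forms of the same rank over a number field $k$, with identical local signatures, are $k$-equivalent if and only if their discriminants in $\kk$ and Witt invariants in $\Br(k)$ agree. Writing $q=\langle a_1,\dots,a_{n+1}\rangle$, a direct manipulation of Hilbert symbols yields
\[
    \disc(\alpha q) \equiv \alpha^{n+1}\disc(q) \pmod{k^{\times 2}}, \qquad s(\alpha q) = s(q)\cdot \left(\tfrac{\alpha,\alpha}{k}\right)^{\binom{n+1}{2}} \cdot \left(\tfrac{\alpha, \det(q)}{k}\right)^{n},
\]
from which the corresponding change in $c(\alpha q)$ follows via \eqref{eq:WittHasse}.

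When $n$ is even, the rank $n+1$ is odd, so scaling by $\alpha$ changes the discriminant by $\alpha$ in $\kk$; one can always pick $\alpha$ so that $\disc(\alpha q) = \disc(q')$, and a case-by-case computation (four residues of $n+1 \bmod 8$) shows that for such $\alpha$ the Witt invariant transforms trivially, so commensurability reduces to the single criterion $c(q) = c(q')$. When $n$ is odd, the rank $n+1$ is even, so scaling preserves the discriminant and $\delta = \delta'$ is a necessary condition. The transformation of the Witt invariant then boils down to multiplication by the class $\left(\tfrac{\alpha, \delta}{k}\right)$, which is exactly the subgroup of $\Br(k)$ killed by the restriction $\Br(k) \to \Br(k(\sqrt{\delta}))$. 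Hence the residual obstruction to commensurability is $c(q) \otimes_k k(\sqrt{\delta}) = c(q') \otimes_k k(\sqrt{\delta})$, as claimed.

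The main obstacle is the first stage, namely the translation between commensurability of the arithmetic lattices and projective $k$-equivalence of the quadratic forms. In odd dimensions one must carefully treat the spinor norm, since $\SO(q)_{\Ok}$ projects to only a finite-index subgroup of the normalizer of its image in $\Isom(\H^n)$, and one must ensure that a suitable scalar $\alpha$ can be chosen without destroying admissibility. These technical points are resolved in the proofs of \cite[Theorems 7.2 and 7.4]{Mac11}, from which the proposition follows at once.
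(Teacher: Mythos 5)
The paper does not prove this proposition itself; it cites it directly to Maclachlan \cite[Theorems 7.2 and 7.4]{Mac11}, and your sketch is a correct and faithful outline of the argument in that reference: the Hilbert-symbol bookkeeping checks out (in particular $s(\alpha q)=s(q)\cdot\left(\tfrac{\alpha,\alpha}{k}\right)^{\binom{n+1}{2}}\cdot\left(\tfrac{\alpha,\det q}{k}\right)^{n}$ and the resulting case analysis mod $8$ do give $c(\alpha q)=c(q)$ once discriminants are matched when $n$ is even, and multiplication by $\left(\tfrac{\alpha,\delta}{k}\right)$ when $n$ is odd). You also correctly identify the genuinely hard step---passing from commensurability of the $\Ok$-lattices to projective $k$-equivalence of the forms, with the spinor-norm care required in odd rank and the existence of a totally positive $\alpha$ realizing a given norm class---as the part resolved in \cite{Mac11}, which is exactly what the paper itself defers to that reference.
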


Let $\{\mathfrak{p}_1,\dots,\mathfrak{p}_s\}$ denote the subset of prime ideals in $k$ corresponding to the finite places in $\Ram\left(c(q)\right)$ which split in $k(\sqrt{\delta})$.
Then the finite places at which $c(q)\otimes_k k(\sqrt{\delta})$ is ramified correspond to the $2s$ prime ideals in $k(\sqrt{\delta})$ lying over $\{\mathfrak{p}_1,\dots,\mathfrak{p}_s\}$ \cite[\S7]{Mac11}. We get the following condition which we will use to distinguish commensurability classes. 

\begin{lemma} \label{lem:Mac odd non comm}
    Suppose that $\SO(q)_{\Ok}$ and $\SO(q')_{\Ok}$ are commensurable for admissible quadratic forms $q$ and $q'$ of signature $(n,1)$ defined over $k$ with $n$ odd. If $\nu_\mathfrak{p}\in\Ram\left(c(q)\right)$ corresponds to a prime ideal $\mathfrak{p}$ in $k$ which splits in $k(\sqrt{\delta})$, then $\nu_\mathfrak{p}\in\Ram\left(c(q')\right)$.
\end{lemma}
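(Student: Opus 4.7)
The plan is to combine Maclachlan's proposition with the explicit description of the ramification of the base-changed Witt invariant recalled in the paragraph just preceding the lemma.

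First I would extract what commensurability gives us. Since $\SO(q)_{\Ok}$ and $\SO(q')_{\Ok}$ are commensurable arithmetic lattices of simplest type in odd dimension, \Cref{Maclachlan} yields
\[ \delta := \disc(q) = \disc(q') \quad\text{and}\quad c(q)\otimes_k k(\sqrt{\delta}) = c(q')\otimes_k k(\sqrt{\delta}) \text{ in } \Br(k(\sqrt{\delta})). \]
In particular, the two quaternion algebra classes on the right have identical ramification sets inside $\Omega(k(\sqrt{\delta}))$.

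Next I would invoke the description recorded just above the lemma: the finite places of $k(\sqrt{\delta})$ at which $c(q)\otimes_k k(\sqrt{\delta})$ is ramified are precisely the primes of $k(\sqrt{\delta})$ lying over those primes of $k$ which are simultaneously in $\Ram_{fin}(c(q))$ and split in $k(\sqrt{\delta})$ (the key local reason is that base change doubles local invariants, so inert or ramified primes of $k$ automatically split the quaternion class after base change, while split primes preserve the local invariant on each factor). The same description, of course, applies with $q$ replaced by $q'$.

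With these two ingredients, the proof becomes a one-line matching. Given $\mathfrak{p}\in\Ram(c(q))$ with $\mathfrak{p}$ splitting in $k(\sqrt{\delta})$ as $\mathfrak{P}_1\mathfrak{P}_2$, the description forces $\mathfrak{P}_1,\mathfrak{P}_2\in\Ram(c(q)\otimes_k k(\sqrt{\delta}))$. By commensurability these primes also lie in $\Ram(c(q')\otimes_k k(\sqrt{\delta}))$, and applying the description to $q'$ in the reverse direction shows that the prime of $k$ below $\mathfrak{P}_i$—namely $\mathfrak{p}$—must lie in $\Ram(c(q'))$, as desired. The main (and only) obstacle is really just invoking the right classification; no substantive new calculation is needed, since the local behavior of base change for quaternion algebras is exactly what powers the cited description from \cite[\S 7]{Mac11}.
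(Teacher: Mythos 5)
Your proposal is correct and is exactly the argument the paper leaves implicit (the lemma follows without a separate proof from \Cref{Maclachlan} together with the description, recalled in the paragraph just before the lemma, of $\Ram_{fin}(c(q)\otimes_k k(\sqrt{\delta}))$ as the primes of $k(\sqrt{\delta})$ lying over the split primes of $k$ in $\Ram(c(q))$). Your explanation of why base change kills the local invariant at inert or ramified primes while preserving it at split primes is the correct local mechanism behind that description.
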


\section{Realizing isometries in quadratic spaces} \label{s:realizing}

Bayer-Fluckiger proved the following Hasse principle for the existence of an isometry of a given characteristic polynomial

\begin{theorem}\cite{Bayer-Fluckinger}\label{thm BF main}
     Suppose that $k$ is a global field. A quadratic space $(V,q)$ over $k$ has an isometry with characteristic polynomial $F$ if and only if such an isometry exists over all the completions of $k$.
\end{theorem}

In this section we gather more precise conditions given in \cite{Bayer-Fluckinger} for a quadratic space to admit an isometry with a given characteristic polynomial.

\begin{definition}
    A monic polynomial $F\in k[x]$is said to be $\epsilon$-symmetric for some $\epsilon=\pm 1$ if $F(x)=\epsilon x^{\deg(F)} F(x^{-1})$. If $\epsilon=1$, we say that $F$ is symmetric.
\end{definition}

Characteristic polynomials of isometries of quadratic spaces are $\epsilon$-symmetric, with $\epsilon$ equal to the constant term of the polynomial (see \cite[Proposition 11]{Bayer-Fluckinger}). If $F\in k[x]$ is a monic polynomial such that $F(0)\neq 0$, set 
\[ F^{*}(x)=\frac{1}{F(0)}x^{\deg(F)}F(x^{-1}). \]

Note that $F^*$ is also monic, $F^*(0)\neq0,$ and $F^{**}=F.$ 

\begin{definition}
    Let $F\in K[x]$ be a monic, $\epsilon$-symmetric polynomial. We say that $F$ is of
    \begin{enumerate}[left=10pt]
        \item[type 0:] if $F$ is a product of powers of $x-1$ and $x+1$.
        \item[type 1:] if $F$ is a product of powers of monic, symmetric, irreducible polynomials in $ k[x]$ of even degree.
        \item[type 2:] if $F$ is a product of polynomials of the form $gg^*$, where $g\in k[x]$ is monic, irreducible, and $g\neq g^*$.
    \end{enumerate}
\end{definition}

\begin{proposition}\cite[Prop 1.3]{Bayer-Fluckinger} \label{BF symm poly types}
Every monic, $\epsilon$-symmetric polynomial $F\in k[x]$ is a product of polynomials of type 0, 1, and 2.
\end{proposition}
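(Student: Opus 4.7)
The plan is to factor $F$ over $k$ into monic irreducibles and analyze how the involution $g \mapsto g^*$ acts on those factors.

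First, observe that because $F$ is monic of degree $n$ and $\epsilon$-symmetric, comparing leading coefficients in $F(x) = \epsilon x^n F(x^{-1})$ gives $F(0) = \epsilon$. Substituting this back into the definition of $F^*$ yields $F^* = F$. Now write $F = \prod_i f_i^{e_i}$ with the $f_i$ distinct monic irreducibles in $k[x]$. Since $*$ sends monic irreducibles to monic irreducibles and satisfies $(f^*)^* = f$, the expression $F^* = \prod_i (f_i^*)^{e_i}$ is also a factorization into distinct monic irreducibles. Unique factorization together with $F = F^*$ then forces the permutation of $\{f_i\}$ induced by $*$ to preserve multiplicities on each orbit.

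Next I would run a case analysis on the $*$-orbits of the $f_i$. If $f_i = x-1$ or $f_i = x+1$, both are fixed by $*$ and contribute to the type 0 part. If $f_i$ is a fixed point of $*$ with $\deg f_i \geq 2$, then $f_i$ is monic and symmetric, and its roots are closed under $\alpha \mapsto \alpha^{-1}$; a fixed point of this involution on roots would satisfy $\alpha = \pm 1$, putting $f_i$ into the previous case. Hence the roots pair up in distinct pairs $\{\alpha,\alpha^{-1}\}$ and $\deg f_i$ is even, giving a type 1 contribution. If $f_i \neq f_i^*$, the orbit $\{f_i, f_i^*\}$ has size two and the same multiplicity $e$ on both factors, so one can peel off $(f_i f_i^*)^e$, which is a product of $e$ copies of the form $gg^*$ with $g = f_i$, i.e.\ type 2. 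Assembling the contributions of the three orbit types gives the desired factorization of $F$.

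The only substantive point, as far as I can see, is the observation that a $*$-fixed irreducible factor of degree at least $2$ necessarily has even degree, which reduces to noting that $\alpha \mapsto \alpha^{-1}$ has no fixed points other than $\pm 1$. The remainder is bookkeeping built on unique factorization in $k[x]$ and the involution $*$; I do not expect a serious obstacle.
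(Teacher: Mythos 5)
Your argument is correct and is essentially the standard proof of this factorization result (the paper itself cites \cite[Prop.~1.3]{Bayer-Fluckinger} without reproducing a proof). One small point worth tightening: a $*$-fixed monic irreducible $f_i$ of degree $\geq 2$ is a priori only $f_i(0)$-symmetric, since $f_i^* = f_i$ unwinds to $x^{\deg f_i}f_i(1/x) = f_i(0)\,f_i(x)$; to conclude $f_i$ is \emph{symmetric} in the paper's sense you also need $f_i(0)=1$, but this drops out of the same root-pairing you use for the even-degree claim, as $f_i(0) = (-1)^{\deg f_i}\prod_{\text{pairs}}\alpha\alpha^{-1} = 1$. With that observation made explicit the proof is complete.
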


A monic $\epsilon$-symmetric polynomial is \textit{hyperbolic} if all its components of type 0 and 1 are of the form $f^e$ with $e$ even.

Let $q$ be a quadratic form over $k$ and $F\in k[x]$ be a symmetric polynomial with $\deg(F)=\rank(q)$. For every real place $\nu$ of $k$, let $(r_\nu,s_\nu)$ denote the signature of $q_\nu$, and let $\sigma_\nu$ be the number of pairs of roots of $F\in k_\nu[x]$ that are not on the unit circle.   
Then $F$ and $q$ are said to satisfy the \emph{signature condition} if for every real place $\nu$ of $k$
$r_\nu\geq\sigma_\nu$, $s_\nu\geq\sigma_\nu$, and  whenever $F$ has no type 0 factors $(r_\nu,s_\nu)\equiv(\sigma_\nu,\sigma_\nu)\bmod 2$.

If the rank of $q$ is even and $F$ is as above with $\deg(F)=\rank(q)=2m$, let $S$ be the set of places of $k$ at which the Hasse invariant of $q$ is different from the Hasse invariant of the $2m$-dimensional space associated with the quadratic form $\langle1,\dots,1,-1,\dots,-1\rangle$ of signature $(m,m)$. By the classification of quaternion algebras, this set $S$ is finite.
Then $F$ and $q$ are said to satisfy the \emph{hyperbolicity condition} if whenever $\nu\in S$, then $F\in k_\nu[x]$ is not hyperbolic. (Note that this definition varies slightly from the definition in \cite{Bayer-Fluckinger} but is implied by \cite[Corollary 9.3]{Bayer-Fluckinger}).

For later use, we note here that the Hasse invariant of the $2m$-dimensional space associated, with the quadratic form $\rho^m_m=\langle1,\dots,1,-1,\dots,-1\rangle$ of signature $(m,m)$, is given by
\begin{equation} \label{eq:2m hyp form}
    s(\rho^m_m)= \begin{cases}
    M_2(k) &\text{if } m\equiv 0,1\bmod 4 \\
    \left( \tfrac{-1,-1}{k} \right) &\text{if } m\equiv 2,3\bmod 4 . \end{cases}
\end{equation}

\begin{theorem}\cite[Corollary 9.3]{Bayer-Fluckinger}\label{BF f0 0} Suppose that $k$ is a global field, $F\in k[x]$ is an irreducible polynomial of type 1, and $q$ is a quadratic form with $\rank(q)=\deg(F)$. The quadratic space $(V,q)$ has an isometry with characteristic polynomial $F$ if and only if the following conditions are satisfied:
\begin{enumerate}[left=10pt]
    \item[(i)] $\det(q)\equiv F(1)F(-1)$ in $\kk$, 
    \item[(ii)] the signature condition holds,
    \item[(iii)] the hyperbolicity condition holds.
\end{enumerate}
\end{theorem}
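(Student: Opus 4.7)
The plan is to translate the problem into one about Hermitian forms over a field extension with involution, and then apply a Hasse principle. Since $F$ is irreducible of type $1$, the quotient $E = k[x]/(F)$ is a field extension of $k$ of degree $\deg(F)$, and because $F$ is symmetric of even degree the assignment $x \mapsto x^{-1}$ descends to a non-trivial $k$-involution $\sigma$ of $E$ whose fixed field $E_0$ has index $2$. Any isometry $\gamma$ of $(V,q)$ with minimal polynomial $F$ turns $V$ into a free $E$-module of rank one, since $\dim_k V = [E:k]$, and the quadratic form $q$ then arises as the Scharlau transfer $T_\sigma(h)$ of a one-dimensional Hermitian form $h$ on $V$ with respect to $\sigma$; the action of $x$ on $E$, which satisfies $x\sigma(x)=1$, supplies the isometry. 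Conversely, every such Hermitian form produces a pair $(q,\gamma)$ by transfer. Thus the existence of the desired $\gamma$ reduces to showing that $q$ lies in the image of $T_\sigma$ from one-dimensional Hermitian $(E,\sigma)$-forms.

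\textbf{Necessity.} For condition (i), I would use the classical identity that for an isometry with characteristic polynomial $F$ one has $\det(q) \equiv F(1)F(-1) \bmod (k^\times)^2$ whenever $\deg(F)$ is even; this is a Gram-matrix computation using $\gamma^t S_q \gamma = S_q$ together with $F(1) = \det(I - \gamma)$ and $F(-1) = \det(I + \gamma)$. Condition (ii) follows by decomposing $F_\nu$ over $\R$ into quadratic irreducible factors: those with roots on the unit circle leave invariant a definite plane, while those with reciprocal real roots leave invariant a hyperbolic plane, and counting these yields the signature relations. Condition (iii) is the Hasse-invariant shadow of the same decomposition: if $F_\nu$ were hyperbolic then $h_\nu$ would split into hyperbolic Hermitian planes, so its transfer would carry the Hasse invariant of $\rho^m_m$ given in \eqref{eq:2m hyp form}, contradicting $\nu \in S$.

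\textbf{Sufficiency.} Here I would invoke the Hasse principle for Hermitian forms over $(E,\sigma)$ (Landherr's theorem). Condition (i) pins down the class of $\det(h)$ in $E_0^\times / N_{E/E_0}(E^\times)$ up to permissible local choices. At each place $\nu$ of $k$ I would construct a local Hermitian form $h_\nu$ whose transfer matches $q_\nu$: at complex or split places this is automatic because $(E \otimes_k k_\nu, \sigma)$ is hyperbolic; at real places, condition (ii) provides the correct Hermitian signature and condition (i) the correct Hermitian determinant; at finite places, condition (iii) together with the explicit formula for the Hasse invariant of a Scharlau transfer produces the required local Hermitian form. Landherr's principle then assembles the local pieces into a global $h$, whose transfer has the same local invariants as $q$ at every place, so Hasse--Minkowski forces $T_\sigma(h) \simeq q$.

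\textbf{Main obstacle.} The technical heart of the argument is the computation matching local invariants of a Hermitian form to those of its Scharlau transfer, especially at finite places where $E_0/k$ is inert or ramified. Showing that condition (iii) is \emph{precisely} the right obstruction, and that no stronger condition is needed, requires a careful analysis in $\Br(k)$ of the quaternion algebras appearing in the transfer formula. It is this Brauer-group bookkeeping, rather than the formal setup with $(E,\sigma)$, that I expect to consume most of the work.
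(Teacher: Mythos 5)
The paper does not prove this statement; it is quoted verbatim from Bayer-Fluckiger (Corollary 9.3 there) and used as a black-box ingredient. Your outline does match the strategy of the cited source: encode an isometry with irreducible symmetric minimal polynomial $F$ as a rank-one Hermitian form over the field $E=k[x]/(F)$ with the involution induced by $x\mapsto x^{-1}$, move between $q$ and the Hermitian form via Scharlau transfer, and run a local-global argument (Landherr for Hermitian forms, Hasse--Minkowski for the transferred quadratic form). You also correctly flag where the real work lies --- computing the Hasse invariant of the transfer in $\Br(k_\nu)$ at each place and showing the hyperbolicity condition is exactly the local obstruction on the set $S$ --- but your two paragraphs are a strategy sketch, not a reconstruction of that Brauer-group bookkeeping, which occupies several sections of the cited paper. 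One point worth tightening: $\det(q)\equiv F(1)F(-1)$ in $\kk$ is not a direct consequence of $\gamma^t S_q\gamma = S_q$ (that relation together with $F(0)=1$ and $\deg F$ even yields tautologies). The clean derivation observes that $b(v,w)=v^t S_q(\gamma-\gamma^{-1})w$ is alternating, so $\det(S_q)\det(\gamma-\gamma^{-1})$ is a square, and then computes $\det(\gamma-\gamma^{-1})=\det(\gamma)^{-1}\det(\gamma-I)\det(\gamma+I)=F(1)F(-1)$.
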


Bayer-Fluckiger also established in \cite{Bayer-Fluckinger} conditions for a polynomial of mixed type to be the characteristic polynomial in a quadratic space $(V,q)$. We state the necessary results therein, applied to our cases of interest. We will consider polynomials of the form $F(x)=f(x)\cdot f_0(x)$ with $f$ an irreducible polynomial of type 1 and $f_0$ a polynomial of type 0.  

\begin{proposition}\cite[Proposition 11.11]{Bayer-Fluckinger} \label{Bf f0 deg 1}
    Suppose that $k$ is a global field, $F(x)=f(x)\cdot f_0(x)\in k[x]$ is the product of an irreducible polynomial $f$ of type 1 and a degree 1 polynomial $f_0$ of type 0. Let $q$ be a quadratic form of odd dimension with $\rank(q)=\deg(F)=\deg(f)+1$.
    The quadratic space $(V,q)$ has an isometry with characteristic polynomial $F$ if and only if $q\simeq \langle d_0\rangle\oplus q'$ where $d_0=\det(q)f(1)f(-1)$ and $(V',q')$ has an isometry with characteristic polynomial $f$.
\end{proposition}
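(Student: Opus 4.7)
The plan is to reduce the proposition to a primary decomposition argument. Since $f$ is of type 1 it is irreducible and of even degree at least $2$; since $f_0$ has type 0 and degree 1, $f_0(x) = x - \epsilon$ for some $\epsilon \in \{\pm 1\}$. In particular $\gcd(f, f_0) = 1$ in $k[x]$, and moreover $f(\epsilon) \neq 0$. This last observation will drive the orthogonality of the eigenspace decomposition below.

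For the forward direction, suppose $\phi \in \Or(q)$ has minimal polynomial $F = f f_0$. The primary decomposition of $V$ as a $k[\phi]$-module yields $V = V_f \oplus V_{f_0}$ with $V_f = \ker f(\phi)$ and $V_{f_0} = \ker(\phi - \epsilon \Id)$. To show this decomposition is orthogonal with respect to the bilinear form $B$, I would use the adjunction $B(p(\phi)v, w) = B(v, p(\phi^{-1})w)$, valid for any $p \in k[x]$ because $\phi$ is an isometry. Applied with $v \in V_f$, $w \in V_{f_0}$, and $p = f$, this gives
\[ 0 \;=\; B(f(\phi)v, w) \;=\; B(v, f(\phi^{-1})w) \;=\; B(v, f(\epsilon) w) \;=\; f(\epsilon)\, B(v,w), \]
so $B(v,w) = 0$ since $f(\epsilon) \neq 0$. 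Because $V_f$ is annihilated by the irreducible polynomial $f$, it has a $k[x]/(f)$-module structure and hence $\dim_k V_f$ is a positive multiple of $\deg f$; combined with $\dim V = \deg f + 1$ this forces $\dim V_f = \deg f$ and $\dim V_{f_0} = 1$, so $\phi|_{V_f}$ has characteristic polynomial $f$. Choosing $v$ spanning $V_{f_0}$ with $q(v) = d_0$ and setting $q' = q|_{V_f}$ yields the orthogonal splitting $q \simeq \langle d_0 \rangle \oplus q'$. Applying condition (i) of \Cref{BF f0 0} to the pair $(q', f)$ gives $\det(q') \equiv f(1) f(-1)$ in $\kk$, hence $d_0 \equiv \det(q)\, f(1) f(-1)$ modulo squares, as claimed.

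Conversely, assume $q \simeq \langle d_0 \rangle \oplus q'$ with $d_0 = \det(q)\, f(1) f(-1)$ and with $(V', q')$ admitting an isometry $\phi'$ whose characteristic polynomial is $f$; irreducibility of $f$ forces $\phi'$ to have minimal polynomial $f$ as well. Define $\phi := \epsilon\Id \oplus \phi'$, which is an isometry of $\langle d_0 \rangle \oplus q'$ because $\epsilon = \pm 1$. The minimal polynomials of $\phi$ on the two summands are $x - \epsilon = f_0$ and $f$ respectively, and since these are coprime the minimal polynomial of $\phi$ on $V$ is their product $F$, completing the construction.

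The principal obstacle in this approach is the orthogonality of the primary decomposition, which rests on the small but crucial observation that $f(\epsilon) \neq 0$, itself a consequence of $f$ being irreducible of degree at least $2$. Everything else reduces to bookkeeping: one dimension count, one choice of generator for $V_{f_0}$, and a single invocation of \Cref{BF f0 0} to extract the discriminant identity.
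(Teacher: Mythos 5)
The paper does not prove this statement; it simply cites it as Proposition~11.11 of Bayer-Fluckiger, so there is no in-paper proof to compare against. Judged on its own, your proof is correct and runs along the standard lines one would expect: a primary decomposition of $V$ under $\phi$, an adjunction argument $B(p(\phi)v,w)=B(v,p(\phi^{-1})w)$ to see that the two primary components are orthogonal, a dimension count to identify $q|_{V_f}$ with $q'$ and $q|_{V_{f_0}}$ with $\langle d_0\rangle$, and the necessary direction of \Cref{BF f0 0}(i) to pin down $d_0$ modulo squares; the converse is the obvious block-sum construction with $\epsilon\Id\oplus\phi'$.

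Two small points worth tightening. First, to assert that $\dim_k V_f$ is a \emph{positive} multiple of $\deg f$ you need $V_f\neq 0$ (and likewise $V_{f_0}\neq 0$); this follows because the minimal polynomial of $\phi$ is exactly $F=f\cdot f_0$, so each irreducible factor must act nontrivially, but it should be stated rather than left implicit. Second, your converse never invokes the hypothesis $d_0\equiv\det(q)f(1)f(-1)$; that is harmless, and in fact correct, since that congruence is automatic once $q'$ admits an isometry with characteristic polynomial $f$ (again by \Cref{BF f0 0}(i) together with $\det(q)\equiv d_0\det(q')$), but a reader may wonder why the hypothesis appears in the statement at all, so a sentence noting its redundancy in that direction would clarify matters.
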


The \emph{Witt index} of a quadratic form $q$ is the largest integer $m$ such that $q$ has a decomposition with $\rho^m_m$ (as in \Cref{eq:2m hyp form}) as a summand.

\begin{proposition}\cite[Proposition 11.9]{Bayer-Fluckinger} \label{Bf f0 deg 2}
    Suppose that $k$ is a global field, $F(x)   =f(x)\cdot f_0(x)\in k[x]$ is the product of an irreducible polynomial $f$ of type 1 and a degree 2 polynomial $f_0$ of type 0. Let $q$ be a quadratic form of even rank with $\rank(q)=\deg(F)=\deg(f)+2$.
    The quadratic space $(V,q)$ has an isometry with characteristic polynomial $F$ if and only if the following conditions are satisfied:
\begin{enumerate}[left=10pt]
    \item[(i)] the signature condition holds
    \item[(ii)] if $\nu$ is a finite place of $k$ where $f\in k_\nu[x]$ is hyperbolic, then the Witt index of $q$ defined over  $k_\nu$ is $\geq \frac{1}{2}\deg(f)$.
\end{enumerate}
\end{proposition}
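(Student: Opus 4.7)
The plan is to reduce Proposition~\ref{Bf f0 deg 2} to the irreducible type 1 case (Theorem~\ref{BF f0 0}) by means of the orthogonal decomposition coming from the coprime factorization $F=f\cdot f_0$. Since $f$ is irreducible of even degree (so that neither $x-1$ nor $x+1$ divides it) and $f_0$ is a product of $x\pm 1$, we have $\gcd(f,f_0)=1$ in $k[x]$, and both factors are self-reciprocal.

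For the necessity direction, suppose $t$ is an isometry of $(V,q)$ with minimal polynomial $F$. By the primary decomposition, we obtain a $t$-invariant splitting $V=V_f\oplus V_{f_0}$ with $\dim V_f=\deg f$ and $\dim V_{f_0}=2$. A standard fact in the theory of isometries of quadratic spaces is that generalized eigenspaces associated to coprime self-reciprocal factors are mutually $q$-orthogonal, so $q$ restricts nondegenerately to each summand. Condition (i) follows by examining this restriction at each real place and counting pairs of roots of $F$ off the unit circle, noting that $f_0$ contributes only the eigenvalues $\pm 1$, so that all such pairs arise from $f$. For condition (ii), at a finite place $\nu$ where $f\in k_\nu[x]$ is hyperbolic, the restriction of $t$ to $V_f\otimes k_\nu$ has hyperbolic minimal polynomial, and by a classical result in the theory of isometries of quadratic forms (cf.~\cite{Bayer-Fluckinger}) this forces $q|_{V_f}$ to be hyperbolic over $k_\nu$ and therefore contribute Witt index $\tfrac12\deg f$ to $q_\nu$, as required.

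For the sufficiency direction, the goal is to produce an orthogonal decomposition $q\simeq q'\perp q_0$ in which $q_0$ has rank $2$ and admits an isometry with minimal polynomial $f_0$, while $q'$ satisfies all three hypotheses of Theorem~\ref{BF f0 0} for the polynomial $f$: namely, $\det(q')\equiv f(1)f(-1)$ in $\kk$, the signature condition, and the hyperbolicity condition. The rank-2 form $q_0$ is nearly pinned down by $f_0$; for instance, when $f_0=(x-1)(x+1)$, $q_0$ must have signature $(1,1)$ at every real place and hence lies in a narrow family with $\det(q_0)\equiv -1$ modulo squares. Once $q_0$ is fixed, Theorem~\ref{BF f0 0} applied to $q'$ yields an isometry of $q'$ with minimal polynomial $f$, which combined with any isometry of $q_0$ with minimal polynomial $f_0$ gives the required isometry of $q$ with minimal polynomial $F$.

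The main obstacle is verifying that the residual form $q'$ obtained from a suitable $q_0$ satisfies the hyperbolicity condition of Theorem~\ref{BF f0 0}. At each finite place $\nu$ where $f\in k_\nu[x]$ is hyperbolic, the Hasse invariant of $q'_\nu$ must agree with that of the $2m$-dimensional split form $\rho^{m}_{m}$ of \eqref{eq:2m hyp form}, where $m=\tfrac12\deg f$. Hypothesis (ii) of the proposition guarantees that the Witt index of $q_\nu$ is at least $m$; after subtracting the Witt contribution of $q_{0,\nu}$, this forces $q'_\nu$ to be entirely hyperbolic, which in turn gives the required Hasse-invariant equality via \eqref{eq:WittHasse}. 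Carrying out this local-to-global verification at every place simultaneously, while respecting the global parity constraint on Hasse invariants coming from the classification of quaternion algebras, is where the bulk of the technical work lies.
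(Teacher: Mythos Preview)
The paper does not prove this proposition at all: it is stated as a direct citation of \cite[Proposition 11.9]{Bayer-Fluckinger}, with no argument given. So there is no ``paper's own proof'' to compare your attempt against; you are attempting to reprove a result the authors simply import.

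That said, a few comments on the sketch itself. The overall strategy---split $q$ as $q'\perp q_0$ along the coprime factorization $F=f\cdot f_0$ and reduce to Theorem~\ref{BF f0 0} for $q'$---is the natural one and is essentially what underlies Bayer-Fluckiger's argument. The necessity direction is fine. In the sufficiency direction, however, two points deserve care. First, your claim that for $f_0=(x-1)(x+1)$ the form $q_0$ ``must have signature $(1,1)$ at every real place'' is false: any nondegenerate rank-$2$ form $\langle a,b\rangle$ admits the isometry $\mathrm{diag}(1,-1)$ with minimal polynomial $(x-1)(x+1)$, regardless of signature. This actually helps you, since it gives more freedom in choosing $q_0$. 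Second, and more seriously, the step ``after subtracting the Witt contribution of $q_{0,\nu}$, this forces $q'_\nu$ to be entirely hyperbolic'' does not follow from hypothesis~(ii) alone. If $q_\nu$ has Witt index exactly $m$ (so anisotropic kernel of dimension $2$), then $q'_\nu$ is hyperbolic only if $q_{0,\nu}$ is isometric to that anisotropic kernel; if $q_\nu$ is fully split, you instead need $q_{0,\nu}$ to be a hyperbolic plane. Thus the local requirements on $q_0$ vary from place to place, and producing a single global $q_0$ meeting all of them (while also handling the determinant and signature constraints needed for Theorem~\ref{BF f0 0}) is exactly the nontrivial local--global argument you allude to but do not carry out. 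Bayer-Fluckiger's proof handles this via her general machinery rather than by an ad hoc choice of $q_0$.
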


\begin{proposition}\cite[Proposition 11.2]{Bayer-Fluckinger} \label{BF f0 deg3}
    Suppose that $k$ is a global field, $F(x)=f(x)\cdot f_0(x)\in k[x]$ is the product of an irreducible polynomial $f$ of type 1 and a polynomial $f_0$ of type 0 having $\deg(f_0)\geq 3$. Let $q$ be a quadratic form with $\rank(q)=\deg(F)$.
    The quadratic space $(V,q)$ has an isometry with characteristic polynomial $F$ if and only if the signature condition is satisfied.
\end{proposition}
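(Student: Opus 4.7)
The strategy is to construct an isometry by orthogonally decomposing $(V,q)$ into summands corresponding to the factors $f$ and $f_0$ of $F$. Since $\gcd(f, f_0) = 1$ and both factors are $\epsilon$-symmetric, any isometry of $(V,q)$ with minimal polynomial $F$ induces an orthogonal primary decomposition $V = V_f \oplus V_{f_0}$ into non-degenerate $A$-invariant subspaces. Necessity of the signature condition follows immediately from this: at each real place $\nu$, all roots of $f_0$ lie on the unit circle (being $\pm 1$), so only the roots of $f$ contribute to $\sigma_\nu$, and matching the signatures of $V_{f,\nu}$ and $V_{f_0,\nu}$ against these roots yields the stated conditions on $r_\nu$ and $s_\nu$.

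For sufficiency, given the signature condition, I would use Hasse--Minkowski to exhibit a global orthogonal decomposition $q \simeq q_f \perp q_{f_0}$ with $\dim q_f = \deg f$ and $\dim q_{f_0} = \deg f_0$, choosing local invariants so that $q_f$ fulfils the three hypotheses of \Cref{BF f0 0} relative to $f$, while $q_{f_0}$ admits an isometry of minimal polynomial $f_0 = (x-1)^a(x+1)^b$ (built by placing Jordan blocks of the correct sizes on mutually orthogonal $\pm 1$ generalized eigenspaces of the appropriate local signatures). Then \Cref{BF f0 0} produces $A_f \in \Or(q_f)$ with minimal polynomial $f$, and the direct sum $A_f \oplus A_{f_0} \in \Or(q)$ has minimal polynomial $f \cdot f_0 = F$ since $f$ and $f_0$ are coprime.

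The main obstacle is coordinating the local invariants so that (a) the decomposition $q \simeq q_f \perp q_{f_0}$ is realizable globally, (b) the hyperbolicity condition of \Cref{BF f0 0} holds for $q_f$ at every finite place where $f$ is locally hyperbolic, and (c) the prescribed Jordan structure on $q_{f_0}$ is locally realizable. The hypothesis $\deg(f_0) \geq 3$ is essential here: it guarantees that $q_{f_0}$ has enough dimensions to absorb any correction needed for the Hasse invariant of $q_f$, while the odd parity of $\deg(f_0)$ provides a non-trivial $\pm 1$ eigenspace summand to tune the determinant. A concrete way to organize the argument is by induction on $\deg(f_0)$, with base cases delegated to \Cref{Bf f0 deg 1} and \Cref{Bf f0 deg 2} and each inductive step peeling off either a single type~0 linear factor or a pair $(x-1)(x+1)$ while checking that the signature condition and the matching local data propagate to the smaller form.
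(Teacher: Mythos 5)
This proposition is cited from Bayer-Fluckiger (\cite[Proposition 11.2]{Bayer-Fluckinger}); the present paper gives no proof of it, so there is nothing internal to compare against and your argument must be judged on its own. The high-level plan --- orthogonal primary decomposition $V = V_f \oplus V_{f_0}$ along the coprime factorisation for necessity, and for sufficiency a global splitting $q \simeq q_f \perp q_{f_0}$ with $q_f$ tuned to satisfy \Cref{BF f0 0} --- is the right one and is surely close to the source.

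However, the step you yourself label ``the main obstacle'' is where the entire proof lives, and the sketch does not close it. The hyperbolicity condition of \Cref{BF f0 0} pins $s_\nu(q_f)$ to $s_\nu(\rho^m_m)$ at every finite place where $f$ is hyperbolic, and condition (i) pins $\det q_f$ to $f(1)f(-1)$; Hilbert reciprocity then imposes a global parity constraint on the remaining Hasse invariants. The hypothesis $\deg f_0 \geq 3$ does not by itself supply the slack to meet it: it only makes $q_{f_0}$ locally unconstrained (any discriminant and Hasse invariant occur in dimension $\geq 3$), but $q_f$ still has to exist globally. The missing ingredient is that $f$ is irreducible of degree $\geq 2$ over $k$, so by \Cref{lem: BF splitting hyperbolic} together with Chebotarev there exist finite places (infinitely many, though one suffices) at which $f$ is not hyperbolic, and such a free place absorbs the reciprocity constraint; you never invoke this. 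Separately, ``placing Jordan blocks of the correct sizes'' on $V_{f_0}$ is not always possible: a single Jordan block of even size for a $\pm 1$ eigenvalue preserves no nondegenerate symmetric bilinear form, so if $f_0 = (x-1)^a(x+1)^b$ has $a, b \geq 1$ (in which case one of $a, b$ is even, since $a+b$ is odd), no isometry of an $(a+b)$-dimensional quadratic space has minimal polynomial $f_0$. Read literally, your construction would therefore fail for such $f_0$; this is strong evidence that the statement --- as the paper itself applies it in \Cref{prop:geq3 inf comm}, where it asks for $T$ with \emph{characteristic} polynomial $F$ --- concerns the characteristic polynomial, a point worth resolving before the sketch can be turned into a proof. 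Finally, the fallback induction on $\deg f_0$ does not obviously work either: the would-be base cases \Cref{Bf f0 deg 1} and \Cref{Bf f0 deg 2} carry genuine extra necessary conditions beyond the signature condition, so the inductive step would have to show those conditions hold automatically after each peel-off, which is not established.
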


\section{Necessary and sufficient criteria}\label{sec:criteria}

In this section we find necessary and sufficient conditions for the realization of $\lambda$ in the commensurability class of  $\SO(q)_{\Ok}$ in terms of the rank, discriminant, and Hasse invariant of an admissible quadratic form $q$ defined over $k$. Note that for an admissible quadratic form $q$, the Hasse invariant splits at all real places.

We fix the following notation for the remainder.
\begin{itemize}
    \item $n\geq2$.
    \item $\lambda$ a Salem number with minimal polynomial $f_\lambda\in\Z[x]$ over $\Q$.
    \item $k\subseteq \Q(\lambda+\lambda^{-1})$ a totally real number field with embedding $\nu_0:k\hookrightarrow\R$.
    \item $f(x)\in k[x]$ the minimal polynomial of $\lambda$ over $k$.
    \item for $s\in\N$, $f_s(x)\in k[x]$ the minimal polynomial of $\lambda^s$ over $k$.
    \item The number field $K=\Q(\lambda)=k(\lambda)\cong k[x]/(f)$ with splitting field $K'$.
    \item $E=\Q(\lambda+\lambda^{-1})=k(\lambda+\lambda^{-1})$ with splitting field $E'$.
    \item $D=f(1)f(-1)$.
    \item $\Sigma^{ns}(K)$ the set of places $\nu$ of $k$ for which there exists a place $\mathfrak{v}$ of $E$ lying over $\nu$ which does not split in $K/E$.
\end{itemize}

The following theorem gives an explicit set of conditions on an admissible quadratic form $q$ defined over a totally real number field $k$ to determine whether or not a Salem number is realized in the commensurability class of the arithmetic lattice $\SO(q)_{\Ok}$. 

\begin{main} \label{thm conditions}
    Let $\lambda$ be a Salem number and let $q$ be an admissible quadratic form defined over the totally real number field $k$. Then $\log\lambda\in\Q\mathcal{L}(\SO(q)_{\Ok})$ exactly when $k\subseteq\Q(\lambda+\lambda^{-1})$ and one of the following is true
    \begin{enumerate}[(i)]
        \item $n+1-\deg_k(\lambda)\geq 3$,
        \item $n+1-\deg_k(\lambda)=2$, $\frac{n-1}{2}\equiv 1,2\bmod4$ and either
            \begin{itemize}
                \item[--] for any $\nu\in\Ram\left(s(q)\right)$ such that $\det(q)\equiv(-1)^{m+1}$ in $k_\nu^\times/k_\nu^{\times 2}$, $\nu\in\Sigma^{ns}(K)$,
                \item[--] or, if there exists a root of unity $\zeta$ such that $(\zeta+\zeta^{-1})\in k$, $\det(q)(4-\left(\zeta+\zeta^{-1})^2\right)\cong D$ in $\kk$ and for any $\nu\in\Ram\left(s(q)\cdot \left(\tfrac{-1,-1}{k}\right) \right)$ if $\nu$ splits in $k(\zeta)/k$ then $\nu\in\Sigma^{ns}(K)$,
            \end{itemize}
        \item $n+1-\deg_k(\lambda)=2$, $\frac{n-1}{2}\equiv 0,3\bmod4$, and either
            \begin{itemize}
                \item[--] for any $\nu\in\Ram\left(s(q)\cdot \left(\tfrac{-1,-1}{k}\right) \right)$ such that $\det(q)\equiv(-1)^{m+1}$ in $k_\nu^\times/k_\nu^{\times 2}$, $\nu\in\Sigma^{ns}(K)$,
                \item[--] or, if there exists a root of unity $\zeta$ such that $(\zeta+\zeta^{-1})\in k$, $\det(q)(4-\left(\zeta+\zeta^{-1})^2\right)\cong D$ in $\kk$ and for any $\nu\in\Ram\left(s(q) \right)$ if $\nu$ splits in $k(\zeta)/k$ then $\nu\in\Sigma^{ns}(K)$,
            \end{itemize}
        \item $n+1-\deg_k(\lambda)=1$, $ \frac{n}{2}\equiv 0,1\bmod4$, and $\Ram\left(s(q)\right)\subset\Sigma^{ns}(K)$.
        \item $n+1-\deg_k(\lambda)=1$, $ \frac{n}{2}\equiv 2,3\bmod4$, and $\Ram\left(s(q)\cdot \left(\tfrac{-1,-1}{k}\right)\right) \subseteq\Sigma^{ns}(K)$,
        \item $n+1-\deg_k(\lambda)=0$, $\frac{n+1}{2}\equiv 0,1\bmod4$, $\det(q)\equiv f(1)f(-1)$ in $\kk$, and $\Ram\left(s(q)\right)\subset\Sigma^{ns}(K)$,
        \item or $n+1-\deg_k(\lambda)=0$, $\frac{n+1}{2}\equiv 2,3\bmod4$, $\det(q)\equiv f(1)f(-1)$ in $\kk$, and $\Ram\left(s(q)\cdot \left(\tfrac{-1,-1}{k}\right)\right)\subseteq\Sigma^{ns}(K)$.
    \end{enumerate}
\end{main}

The proof of \Cref{thm conditions} is divided according to the difference $n+1-\deg_k(\lambda)$ in \Cref{prop:dif3 conditions,prop:dif 2 conditions,prop: dif 1 conditions,prop:dif 0 conditions}.
We obtain as an immediate consequence the following description of 
the rational length spectrum for an arithmetic lattice in $\Isom(\H^n)$ of simplest type.

\begin{corollary}
    \[ \Q\mathcal{L}(\SO(q)_{\Ok})= \{ r\log\lambda \mid r\in\Q \text { and $\lambda$ satisfies the conditions in \Cref{thm conditions}} \}. \]
\end{corollary}

In order to realize $\lambda$, we will need to find an isometry in $\SO(q)_{\Ok}$ whose characteristic polynomial contains $f$ as a factor. Any other factor will need to also be monic, symmetric, and have all its roots and Galois conjugates of roots on the unit circle, hence roots of unity.
The applications of results by Bayer-Fluckiger in the previous section give us conditions for the existence of isometries in $\Or(q)_{\Ok}$ realizing a desired characteristic polynomial. The following lemma realizes such isometries in the integer points, up to $k$-equivalence.

\begin{lemma}\label{lem:ktoRk}
    Let $T$ be an isometry in $\Or(q)_k$ where $q$ is an admissible quadratic form over $k$. Suppose also that its characteristic polynomial $p_T(x)$ has coefficients in the ring of integer $\Ok$. Then a conjugate of $T$ is contained in some $\Or(q')_{\Ok}$ where $q'\simeq q$.
\end{lemma}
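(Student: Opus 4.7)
The plan is to locate a $T$-invariant $\Ok$-lattice of full rank in $V=k^{n+1}$, and then use an integral basis for it to conjugate $T$ into the integer points of a $k$-equivalent quadratic form. The first observation is that since $T\in\Or(q)_k$ is an isometry, its characteristic polynomial $p_T(x)$ is $\epsilon$-symmetric for some $\epsilon=\pm1$ (see \cite[Proposition 11]{Bayer-Fluckinger}), so its constant term is $\pm1$. Combined with the hypothesis $p_T\in\Ok[x]$, the Cayley--Hamilton relation $p_T(T)=0$ both expresses $T^{n+1}$ as an $\Ok$-linear combination of $I,T,\ldots,T^n$ and forces $T^{-1}\in\Ok[T]\subseteq\mathrm{End}_k(V)$.

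Setting $L_0=\Ok^{n+1}\subseteq V$, I would form
$$L=\sum_{i=0}^{n}T^{i}L_0=\Ok[T]\cdot L_0.$$
By the previous step, $L$ is stable under both $T$ and $T^{-1}$; since $L$ is finitely generated over $\Ok$ and contains $L_0$, it is a $T$-invariant $\Ok$-lattice of full rank $n+1$ in $V$. As $\Ok$ is a Dedekind domain and $L$ is torsion-free, $L$ is projective of rank $n+1$; assuming $\Ok$ is a PID (or after the standard Steinitz-class adjustment in the general case), one can fix an $\Ok$-basis $f_1,\ldots,f_{n+1}$ of $L$. Let $M\in\GL_{n+1}(k)$ be the matrix with columns $f_1,\ldots,f_{n+1}$; then $M^{-1}TM$ has entries in $\Ok$ because the $f_i$ form an $\Ok$-basis of the $T$-invariant lattice $L$, and it preserves the form $q''$ defined by $S_{q''}:=M^tS_qM$, so $q''\simeq q$ over $k$ and $M^{-1}TM\in\Or(q'')$.

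The entries of $S_{q''}$ lie in $k$ but need not belong to $\Ok$. To remedy this, I would pick a nonzero $\alpha\in\Ok$ so that $\alpha^2 S_{q''}$ is integral; scaling each $f_i$ by $\alpha$ rescales $S_{q''}$ by $\alpha^2$ without altering $M^{-1}TM$, so setting $q'=\alpha^2 q''$ yields a form that is still $k$-equivalent to $q$, has the same orthogonal group as $q''$, and satisfies $M^{-1}TM\in\Or(q')_{\Ok}$. The main technical obstacle is the step where $L$ is asserted to admit an $\Ok$-basis: when $\Ok$ is not a PID, $L$ is projective but in general not free, and one must invoke the Steinitz structure theorem $L\cong\Ok^n\oplus\mathfrak{a}$ and either adjust $L_0$ so the Steinitz class becomes trivial or argue with pseudo-bases -- a standard technique in the theory of arithmetic lattices that does not affect the remainder of the argument.
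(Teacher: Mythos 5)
Your argument follows essentially the same approach as the paper's: both form the $T$-stable lattice $L=\sum_{a\geq 0}T^a\Ok^r$, note it is finitely generated of full rank via Cayley--Hamilton (using $p_T\in\Ok[x]$), pick a change-of-basis $g\in\GL(r)_k$ carrying $\Ok^r$ to $L$, and set $S_{q'}=g^tS_qg$. Two of your extra steps are not needed --- deducing $T^{-1}\in\Ok[T]$ from $p_T(0)=\pm 1$ (the paper uses only $T(L)\subseteq L$), and rescaling by $\alpha^2$ (the definition of $\Or(q')_{\Ok}$ imposes integrality on the group elements, not on $S_{q'}$) --- while the freeness-over-$\Ok$ subtlety you flag for non-principal $\Ok$ is equally present, though left implicit, in the paper's proof.
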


\begin{proof}
    Let $r$ denote the rank of $q$, which is also the degree of $p_T\in \Ok[x]$. Denote the coefficients of $p_T$ by $c_0=1,c_1,\dots c_{r-1},c_r=1$. The matrix powers $T,T^2,\dots,T^{r-1}$ all have entries in $k$. 
    Choose $\mathfrak{m}$ to be a common multiple of the primes appearing in the denominators of the entries in these $r-1$ matrices.
    By the Cayley-Hamilton theorem $T$ satisfies its characteristic polynomial, so for any $a\in\Z_{\geq0}$
    \[ T^{a+r}=-\left( T^a +c_1 T^{a+1}+\cdots +c_{r-1}T^{a+r-1} \right) . \]
    
    The subgroup of $k^r$ generated by the images $T^a (\Ok^r)$ for all $a\geq 0$ lies between the subgroups $\Ok^r$, and $(\frac{1}{\mathfrak{m}}\Ok)^r$ and is thus itself an $\Ok$-lattice of rank $r$. We can therefore choose a change of basis matrix $g\in\GL(r)_k$ such that $g^{-1}Tg$ has entries in $\Ok$.
    
    If $S_q$ is the symmetric matrix associated to the quadratic form $q$, let $q'$ be the quadratic form associated to the symmetric matrix $S_{q'}=g^t S_q g$. Then $g^{-1} T g\in \Or(q')_{\Ok}$ and since $q\simeq q'$, $\Or(q)_{\Ok}$ and $\Or(q')_{\Ok}$ are commensurable.
\end{proof}

We will need the following lemma regarding $D=f(1)f(-1)$.

\begin{lemma} \label{lem:determinant neg}
     $D<0$ and if $\sigma\neq\nu_0$ is a real embedding $\sigma:k\hookrightarrow\R$, then $\sigma (D)>0$.
\end{lemma}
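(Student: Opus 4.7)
The plan is to read off the roots of $f$ (and of $\sigma(f)$ for non-identity $\sigma$) from the factorization \eqref{eq:R factor} of the Salem polynomial, and then evaluate $f(\pm 1)$ as explicit products whose signs can be read off directly.

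For $D<0$, I would first note that because $k\subseteq E=\Q(\lambda+\lambda^{-1})$, the automorphism $\lambda\mapsto\lambda^{-1}$ lies in $\mathrm{Gal}(K'/k)$, so the roots of $f$ are a subset of $\{\lambda,\lambda^{-1},e^{\pm i\theta_j}\}$ closed under $\alpha\mapsto\alpha^{-1}$. Writing
\[ f(x)=(x-\lambda)(x-\lambda^{-1})\prod_{j\in J}(x^2-2\cos\theta_j\,x+1) \]
and evaluating gives $f(1)$ as a product of one strictly negative factor ($2-\lambda-\lambda^{-1}$) with positive factors $2-2\cos\theta_j$, and $f(-1)$ as a product of only positive factors. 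The positivity of each $2\mp 2\cos\theta_j$ uses that no $\theta_j$ is $0$ or $\pi$, which in turn follows from $\pm 1$ not being Galois conjugates of $\lambda$ (else $f_\lambda$ would be $x\mp 1$).

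For the non-identity embeddings, I would fix $\sigma:k\hookrightarrow\R$ with $\sigma\neq\mathrm{id}$ and extend to a Galois automorphism $\tilde\sigma$ of $\overline\Q$. The factored form $f(x)=\prod_\alpha(x^2-\alpha\,x+1)$, with $\alpha$ ranging over the roots of $g$, yields $\sigma(f)(x)=\prod_\alpha(x^2-\tilde\sigma(\alpha)\,x+1)$. The desired conclusion $\sigma(f)(\pm1)>0$ follows as in the identity case \emph{provided} no $\tilde\sigma(\alpha)$ equals $\lambda+\lambda^{-1}$, so that every $\tilde\sigma(\alpha)$ is of the form $2\cos\theta_l$ with $l\geq 1$. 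Each factor would then be strictly positive, giving $\sigma(D)=\sigma(f)(1)\sigma(f)(-1)>0$.

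The main step is thus the Galois-theoretic claim that $\lambda+\lambda^{-1}$ is not among the $\tilde\sigma$-images of the roots of $g$. I would prove this by contradiction: if $\tilde\sigma(\alpha)=\lambda+\lambda^{-1}$ and $\alpha=\rho(\lambda+\lambda^{-1})$ for some $\rho\in\mathrm{Gal}(\overline\Q/k)$ (using that $\alpha$ lies in the $k$-orbit of $\lambda+\lambda^{-1}$), then $\tilde\sigma\rho$ fixes $\lambda+\lambda^{-1}$ and hence lies in $\mathrm{Gal}(\overline\Q/E)\subseteq\mathrm{Gal}(\overline\Q/k)$. Combined with $\rho\in\mathrm{Gal}(\overline\Q/k)$, this forces $\tilde\sigma\in\mathrm{Gal}(\overline\Q/k)$, contradicting $\sigma\neq\mathrm{id}$. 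The key feature enabling the argument is that $E$ is generated over $\Q$ by the single element $\lambda+\lambda^{-1}$, so its stabilizer in $\mathrm{Gal}(\overline\Q/\Q)$ is exactly $\mathrm{Gal}(\overline\Q/E)$.
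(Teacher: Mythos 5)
Your proof is correct and follows the same overall strategy as the paper: factor $f$ over $\mathbb{R}$ into reciprocal-pair quadratics $(x-\lambda)(x-\lambda^{-1})$ and $(x^2-2\cos\theta_j x+1)$, observe that only the $\{\lambda,\lambda^{-1}\}$ factor contributes a negative sign to $f(1)f(-1)$, and then argue that this factor is absent from $\sigma(f)$ when $\sigma\neq\mathrm{id}$. The one place where you genuinely deviate is in justifying that last step. The paper simply asserts that ``a non-identity embedding will send all the factors of $f$ to factors in $f_\lambda/f$,'' which is implicitly the norm factorization $f_\lambda=\prod_\sigma\sigma(f)$ together with separability of $f_\lambda$ (so the $\sigma(f)$ have disjoint root sets, and only $\mathrm{id}(f)=f$ picks up $\lambda$). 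You instead give a direct stabilizer argument: if $\tilde\sigma(\rho(\lambda+\lambda^{-1}))=\lambda+\lambda^{-1}$ with $\rho\in\mathrm{Gal}(\overline\Q/k)$, then $\tilde\sigma\rho\in\mathrm{Gal}(\overline\Q/E)\subseteq\mathrm{Gal}(\overline\Q/k)$, forcing $\tilde\sigma\in\mathrm{Gal}(\overline\Q/k)$, i.e.\ $\sigma=\mathrm{id}$. Both arguments are valid and of comparable length; yours is more self-contained and makes explicit the role of the hypothesis $k\subseteq E=\Q(\lambda+\lambda^{-1})$, and you also take care to rule out $\pm1$ as a conjugate of $\lambda$ (which the paper leaves implicit). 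Your identity $f(x)=\prod_\alpha(x^2-\alpha x+1)$ over the roots $\alpha$ of $g$ is correct, because the map $\mu\mapsto\mu+\mu^{-1}$ from roots of $f$ to roots of $g$ is $2$-to-$1$, and it cleanly replaces the paper's slightly loose bookkeeping with $m_k$ factors.
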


\begin{proof}
    By \cite[Lemmas 6.1 and 6.2]{EmeryRatcliffeTschantz}, $2m=\deg(\lambda)=\deg_k(\lambda)\cdot [k:\Q]$, $\deg_k(\lambda)=2m_k$ for some integer $m_k|m$.
    We have that $f|f_\lambda$ and the roots of $f$ are a subset of the reciprocal pairs of roots of $f_\lambda$. 
    Over $\R$, $f_\lambda$ factors as in \Cref{eq:R factor} and then $f$ factors a subset of $m_k$ of these, including $f_0=\left(x^2-(\lambda+\lambda^{-1})x+1\right)$. All other factors of $f_\lambda$  have the form $f_i=(x^2-2\cos\theta_ix+1)$ for $i=1,\dots,m-1$. Clearly $f_0(1)f_0(-1)<0$ and for $i\not=0$ $f_i(1)f_i(-1)>0$. A real embedding $\sigma\neq\nu_0$ will send all the factors of $f$ to factors in $f_\lambda/f$. So $f(1)f(-1)<0$ and $\sigma \left( f(1)f(-1) \right)>0$.
\end{proof}

We now consider each case of $n+1-\deg_k(\lambda)=n+1-\deg(f)$. The easiest case is when this difference is at least 3, while the most difficult case is when this difference is 2.

\begin{proposition} \label{prop:dif3 conditions}
    If $n+1-\deg(f)\geq 3$ and $q$ is an admissible quadratic form over $k$ with signature $(n,1)$ then $\log\lambda\in\Q\mathcal{L}(\SO(q)_{\Ok})$.
\end{proposition}

\begin{proof}
    Consider an arbitrary admissible quadratic from $q$ over $k$ with signature $(n,1)$. Let $F(x)=f(x)\cdot (x+1)^{n+1-\deg(f)}$. By \Cref{BF f0 deg3}, there is an isometry $T\in \Or(q)_k$ with characteristic polynomial $p_T=F$ if and only if the signature condition holds. That the signature condition holds follows from the proof of \Cref{lem:determinant neg}.

    By \Cref{lem:ktoRk}, a conjugate $\gamma$ of $T$ is contained in $\Or(q')_{\Ok}$ for some $q'\simeq q$. Note that $\det(\gamma)=\det(T)=F(0)=1$, so $\gamma\in\SO(q')_{\Ok}$. Thus $\log\lambda\in\mathcal{L}(\SO(q')_{\Ok})\subseteq\Q\mathcal{L}(\SO(q)_{\Ok})$.
\end{proof}

In the remaining cases, we will need to check the hyperbolicity condition. Bayer-Fluckinger proved a equivalent condition for a polynomial to become hyperbolic which will be easier to check in our particular case of $f$, $K$, and $E$.

\begin{lemma}\cite[Lemma 9.4]{Bayer-Fluckinger} \label{lem: BF splitting hyperbolic}
    Let $\nu$ be a place of $k$. Then $f\in k_\nu[x]$ is hyperbolic if and only if $\nu\not\in\Sigma^{ns}(K)$. 
\end{lemma}

Note that $\log\lambda$ is in $\Q\mathcal{L}(\SO(q)_{\Ok})$ if and only if $\SO(q)_{\Ok}$ realizes some power $\lambda^s$. We will need the following lemma.

\begin{lemma}\label{lem:Salem power}
    For any $s\in\N$, let $f_s(x)$ denote the minimal polynomial of $\lambda^s$ over $k$. Then
    \begin{enumerate}
        \item $\lambda^s$ is a Salem number,
        \item $k(\lambda^s)=k(\lambda)=K$,
        \item $f_s(1)f_s(-1)\equiv f(1)f(-1)\equiv D$ in $\kk$,
        \item and for any place $\nu$ of $k$, $f_s\in k_\nu[x]$ is hyperbolic if and only if $f\in k_\nu[x]$ is hyperbolic.
    \end{enumerate}
\end{lemma}

\begin{proof}
    If $\alpha$ is any conjugate of $\lambda$, then clearly $\alpha^s$ is a conjugate of $\lambda^s$. If $\sigma$ is the conjugation taking $\alpha\mapsto\lambda$, then $|\sigma(\alpha^s)|=|\lambda^s|>1$ whereas for any other conjugate $\beta$, $|\sigma(\beta^s)|=|\sigma(\beta)|^s\leq 1$. Thus $\lambda^s$ is Salem of the same degree as $\lambda$ and hence $K=k(\lambda)=k(\lambda^s)$.

    Let $2m=\deg(f)=\deg_k(\lambda)$. As is noted in \cite[(2-4)]{MR2575371} and \cite[p.85]{MR3562700}, the discriminant of $\Delta_f$ of $f$ and that of its trace polynomial $g$ are related by the formula
    \[ \Delta_f=\Delta_g^2(\lambda-\lambda^{-1})^2\prod_{i=1}^{m-1}(e^{i\theta_i}-e^{-i\theta_i})^2. \]
    Note that 
    \[ f(x)= (x^2-(\lambda+\lambda^{-1})x+1)\prod_{i=1}^{m-1}(x^2-(e^{i\theta_i}+e^{-i\theta_i})x+1) \]
    and thus
    \begin{align*}
        f(1)f(-1) &= (2-\lambda-\lambda^{-1})(2+\lambda+\lambda^{-1})\prod_{i=1}^{m-1}(2-e^{i\theta_i}-e^{-i\theta_i})(2+e^{i\theta_i}+e^{-i\theta_i})
        \\ &= (2-\lambda^2-\lambda^{-2}) \prod_{i=1}^{m-1}(2-e^{2i\theta_i}-e^{-2i\theta_i})
        \\ &= (-1)^m(\lambda-\lambda^{-1})^2 \prod_{i=1}^{m-1}(e^{i\theta_i}-e^{-i\theta_i})^2.
    \end{align*} 
    Therefore, 
    \[ \Delta_f=(-1)^m\Delta_g^2 f(1)f(-1)\]
    and hence $\Delta_f\equiv (-1)^m f(1)f(-1)$ in $\kk$. Similarly, $\Delta_{f_s}\equiv (-1)^m f_s(1)f_s(-1)$ in $\kk$. But since $k(\lambda)=k(\lambda^s)$, through the relationship between $\Delta_f$ and the relative field discriminant of $k(\lambda)/k$,
    one sees that $\Delta_f\equiv\Delta_{f_s}$ in $\kk$. 
    It follows that $f_s(1)f_s(-1)\equiv f(1)f(-1)$ in $\kk$.

    Finally, since $K=k[x]/(f)=k[x]/(f_s)$, \cite[Lemma 9.4]{Bayer-Fluckinger} applies equally to $f_s$ as it does to $f$.
\end{proof}

The previous lemma asserts that $f$ and $f_s$ have the same local behavior, as well as the same global behavior up to squares.  We therefore obtain the following consequence combining \Cref{thm BF main} with \Cref{lem:Salem power,lem:ktoRk}.

\begin{corollary}\label{cor:f vs fs}
    $\log\lambda\in\Q\mathcal{L}(\SO(q)_{\Ok})$ if any only if there is some $k$-similar $q'\simeq q$ for which $\log\lambda\in\mathcal{L}(\SO(q')_{\Ok})$.
\end{corollary}

We now consider the case when the difference is 0. In this case, $f$ must be the characteristic polynomial of the isometry realizing $\lambda$.

\begin{proposition} \label{prop:dif 0 conditions}
    Suppose $n+1=\deg(f)$ and $q$ is an admissible quadratic form over $k$ with signature $(n,1)$.
    
    If $ \frac{n+1}{2}\equiv 0,1\bmod4$ then $\log\lambda\in\Q\mathcal{L}(\SO(q)_{\Ok})$ if and only if $\det(q)\equiv f(1)f(-1)$ in $\kk$ and $\Ram\left(s(q)\right)\subset\Sigma^{ns}(K)$.
    
    If $ \frac{n+1}{2}\equiv 2,3\bmod4$ then $\log\lambda\in\Q\mathcal{L}(\SO(q)_{\Ok})$ if and only if $\det(q)\equiv f(1)f(-1)$ in $\kk$ and $\Ram\left(s(q)\cdot \left(\tfrac{-1,-1}{k}\right)\right)\subseteq\Sigma^{ns}(K)$.
\end{proposition}

\begin{proof}
    Let $m=\frac{n+1}{2}$. Up to $k$-similarity, it suffices by \Cref{cor:f vs fs} to consider the realization of $\lambda$ in $\SO(q)_{\Ok}$.

    By \Cref{BF f0 0}, there is an isometry $T$ in $\Or(q)_{k}$ with characteristic polynomial $f(x)$ if and only if the conditions {\it (i)-(iii)} are satisfied. Condition {\it (ii)} is satisfied by the proof of \Cref{lem:determinant neg}.

    Suppose first that $m\equiv 0,1\bmod4$. In this case the Hasse invariant of the quadratic form $\rho^m_m$ has empty ramification (see \Cref{eq:2m hyp form}). Then by \Cref{lem: BF splitting hyperbolic}, condition {\it (iii)} is satisfied if and only if $\Ram\left(s(q)\right)\subseteq\Sigma^{ns}(K)$.

    Suppose now that $m\equiv 2,3\bmod4$. In this case $s(\rho^m_m)=\left(\tfrac{-1,-1}{k}\right)$ (see \Cref{eq:2m hyp form}). Then by \Cref{lem: BF splitting hyperbolic}, condition {\it (iii)} is satisfied if and only if $\Ram\left(s(q)\cdot \left(\tfrac{-1,-1}{k}\right)\right)\subseteq\Sigma^{ns}(K)$.
    
    By \Cref{lem:ktoRk}, a conjugate $\gamma$ of $T$ is contained in $\Or(q')_{\Ok}$ for some $q'\simeq q$. Note that $\det(\gamma)=\det(T)=F(0)=1$, so $\gamma\in\SO(q')_{\Ok}$ and thus 
    \[ \ell(\gamma)=\ell(T)=\log\lambda \in\Q\mathcal{L}(\SO(q)_{\Ok}). \]    
\end{proof}

Next we consider the case when the difference is 1.

\begin{proposition} \label{prop: dif 1 conditions}
    Suppose $n+1-\deg(f)=1$ and $q$ is an admissible quadratic form over $k$ with signature $(n,1)$. 
    
    If $ \frac{n}{2}\equiv 0,1\bmod4$ then $\log\lambda\in\Q\mathcal{L}(\SO(q)_{\Ok})$ if and only if $\Ram\left(s(q)\right)\subset\Sigma^{ns}(K)$.
    
    If $ \frac{n}{2}\equiv 2,3\bmod4$ then $\log\lambda\in\Q\mathcal{L}(\SO(q)_{\Ok})$ if and only if $\Ram\left(s(q)\cdot \left(\tfrac{-1,-1}{k}\right)\right) \subseteq\Sigma^{ns}(K)$.
\end{proposition}

\begin{proof}
    Since the rank of $q$ is odd, up to scaling (and hence $k$-similarity), we may assume $\det(q)=-1$ in $\kk$. Let $m=\frac{n}{2}$. Up to $k$-similarity, it suffices by \Cref{cor:f vs fs} to consider the realization of $\lambda$ in $\SO(q)_{\Ok}$.
    
    Since $f(0)=1$ and $n+1-\deg(f)=1$, an isometry in $\SO(q)_k$ realizing $\lambda$ would need to have characteristic polynomial $F(x)=f(x)(x+1)$. By \Cref{Bf f0 deg 1}, there is an isometry in $\Or(q)_{\Ok}$ with characteristic polynomial $F$ if and only if $q\simeq\langle d_0\rangle\oplus q_1$ where $d_0=D\det(q)$ and $\Or(q_1)_{\Ok}$ has an isometry with characteristic polynomial $f$. Note that $\det(q_1)=D$ (up to squares). From the definition of the Hasse invariant as well as from elementary relations on Hilbert symbols we have 
    \[ s(q)= s(q_1)\cdot \left(\tfrac{\det(q_1),d_0}{k}\right) = s(q_1)\cdot \left(\tfrac{D,-D}{k}\right)=s(q_1). \]
    
    By \Cref{BF f0 0}, there is an isometry $T$ in $\Or(q_1)_{\Ok}$ with characteristic polynomial $f(x)$ if and only if the conditions {\it (i)-(iii)} are satisfied.  Condition {\it (i)} is immediately satisfied. Condition {\it (ii)} is satisfied by the proof of \Cref{lem:determinant neg}. 

    Suppose first that $\frac{n}{2}\equiv 0,1\bmod4$. In this case the Hasse invariant of the quadratic form $\rho^m_m$ has empty ramification (see \Cref{eq:2m hyp form}). Then by \Cref{lem: BF splitting hyperbolic}, condition {\it (iii)} is satisfied if and only if $\Ram\left(s(q)\right)=\Ram\left(s(q_1)\right)\subseteq\Sigma^{ns}(K)$.
      
    Suppose now that $\frac{n}{2}\equiv 2,3\bmod4$. In this case $s(\rho^m_m)=\left(\tfrac{-1,-1}{k}\right)$ (see \Cref{eq:2m hyp form}). Then by \Cref{lem: BF splitting hyperbolic}, condition {\it (iii)} is satisfied if and only if 
    \[ \Ram\left(s(q)\cdot \left(\tfrac{-1,-1}{k}\right)\right)  =\Ram\left(s(q_1)\cdot \left(\tfrac{-1,-1}{k}\right)\right)\subseteq\Sigma^{ns}(K) .\]
       
    By \Cref{lem:ktoRk}, a conjugate $\gamma$ of $T$ is contained in $\Or(q'_1)_{\Ok}$ for some $q'_1\simeq q_1$ and hence $\gamma\in \Or(q')_{\Ok}$ for some $q'\simeq q$. Note that $\det(\gamma)=\det(T)=F(0)=1$, so $\gamma\in\SO(q_1')_{\Ok}\subset \SO(q')_{\Ok}$ and thus 
    \[ \ell(\gamma)=\ell(T)=\log\lambda \in\Q\mathcal{L}(\SO(q)_{\Ok}). \]   
\end{proof}

The last remaining case is when the difference is 2. In the previous cases, it sufficed to consider polynomials of the form $F(x)=f(x)f_0(x)$ where $f_0$ was of type 0. When the difference is 2, we will also need to consider polynomials of the form $F(x)=f(x)f_1(x)$ where $f_1$ is a degree 2 symmetric polynomial which is irreducible over $k$ and whose roots and all of their conjugates are in the unit circle. In this case, $f_1$ would need to be the minimal polynomial over $k$ of some root of unity $\zeta$ for which $\zeta\not\in k$ and $(\zeta+\zeta^{-1})\in k$. 

\begin{proposition} \label{prop:dif 2 conditions}
    Suppose $n+1-\deg(f)=2$ and $q$ is an admissible quadratic form over $k$ with signature $(n,1)$.

    If $\frac{n-1}{2}\equiv 1,2\bmod4$ then $\log\lambda\in\Q\mathcal{L}(\SO(q)_{\Ok})$ if and only if either
    \begin{itemize}
        \item[--] for any $\nu\in\Ram\left(s(q)\right)$ such that $\det(q)\equiv(-1)^{m+1}$ in $k_\nu^\times/k_\nu^{\times 2}$, $\nu\in\Sigma^{ns}(K)$,
        \item[--] or, if there exists a root of unity $\zeta$ such that $(\zeta+\zeta^{-1})\in k$, $\det(q)(4-\left(\zeta+\zeta^{-1})^2\right)\cong D$ in $\kk$ and for any $\nu\in\Ram\left(s(q)\cdot \left(\tfrac{-1,-1}{k}\right) \right)$ if $\nu$ splits in $k(\zeta)/k$ then $\nu\in\Sigma^{ns}(K)$.
    \end{itemize}

    If $ \frac{n-1}{2}\equiv 0,3\bmod4$ then $\log\lambda\in\Q\mathcal{L}(\SO(q)_{\Ok})$ if and only if 
    either
    \begin{itemize}
        \item[--] for any $\nu\in\Ram\left(s(q)\cdot \left(\tfrac{-1,-1}{k}\right) \right)$ such that $\det(q)\equiv(-1)^{m+1}$ in $k_\nu^\times/k_\nu^{\times 2}$, $\nu\in\Sigma^{ns}(K)$,
        \item[--] or, if there exists a root of unity $\zeta$ such that $(\zeta+\zeta^{-1})\in k$, $\det(q)(4-\left(\zeta+\zeta^{-1})^2\right)\cong D$ in $\kk$ and for any $\nu\in\Ram\left(s(q) \right)$ if $\nu$ splits in $k(\zeta)/k$ then $\nu\in\Sigma^{ns}(K)$.
    \end{itemize}
\end{proposition}

\begin{proof}
    Up to $k$-similarity, it suffices by \Cref{cor:f vs fs} to consider the realization of $\lambda$ in $\SO(q)_{\Ok}$.
    If $T$ is an isometry realizing $\lambda$, then the characteristic polynomial $F(x)$ of $T$ must have form either $f(x)(x\pm1)^2$ or $f(x)(x^2-(\zeta+\zeta^{-1})x+1)$ for some root of unity $\zeta$ with $\zeta\not\in k$ and $(\zeta+\zeta^{-1})\in k$.

    Let $m=\frac{n-1}{2}$.


    Consider first the case that $F(x)=f(x)(x\pm1)^2=f(x)f_0(x)$. By \Cref{Bf f0 deg 2}, there is an isometry in $\Or(q)_{\Ok}$ with characteristic polynomial $F$ if and only if the conditions {\it (i)-(ii)} are satisfied. Condition {\it (i)} is satisfied by the proof of \Cref{lem:determinant neg}. 

    Over local fields such as $k_\nu$, we know the following (see e.g. \cite[VI.2]{Lambook}):
    \begin{itemize}
        \item equivalence classes of quadratic forms are uniquely determined by their rank, determinant up to squares, and Hasse invariant,
        \item  the maximal rank of an anisotropic quadratic form is 4  and there is a unique such form up to equivalence: $\langle1, -u,-\pi,u\pi\rangle$ where $\pi$ is a uniformizer of $k_\nu$ and $u$ is a nonsquare unit such that $k_\nu\left(\sqrt{u}\right)$ is the unique unramified quadratic extension of $k_\nu$,
        \item and $\left(\tfrac{u,\pi}{k_\nu}\right)$ is the unique quaternion division algebra up to isomorphism.
    \end{itemize}
    
    Therefore, by \Cref{lem: BF splitting hyperbolic} 
    condition {\it (ii)} is equivalent to: if $q_\nu\simeq \rho^{m-1}_{m-1}\oplus\langle1, -u,-\pi,u\pi\rangle$, then $\nu\in\Sigma^{ns}(K).$

    The Hasse invariant and determinant of the form $\langle1, -u,-\pi,u\pi\rangle$ over $k_\nu$ are given by
    \[ s(\rho^{m-1}_{m-1}\oplus\langle1, -u,-\pi,u\pi\rangle)_\nu =\begin{cases}
        \left(\tfrac{-1,-1}{k_\nu}\right)\cdot \left(\tfrac{u,\pi}{k_\nu}\right) & \text{if } m\equiv0,3\bmod 4
        \\ \left(\tfrac{u,\pi}{k_\nu}\right) & \text{if } m\equiv1,2\bmod 4
    \end{cases} \]
    and
    \[ \det(\rho^{m-1}_{m-1}\oplus\langle1, -u,-\pi,u\pi\rangle)_\nu =\begin{cases}
        1 & \text{if } m\equiv1,3\bmod 4
        \\ -1 & \text{if } m\equiv0,2\bmod 4 
    \end{cases} .\]

    If $m\equiv1,2\bmod 4$, condition {\it (ii)} is satisfied if and only if whenever $\nu\in\Ram\left(s(q)\right)$, if $\det(q)\equiv(-1)^{m+1}$ in $k_\nu^\times/k_\nu^{\times 2}$ then $\nu\in\Sigma^{ns}(K)$.
    If $m\equiv0,3\bmod 4$, condition {\it (ii)} is satisfied if and only if whenever $\nu\in\Ram\left(s(q) \cdot\left(\tfrac{-1,-1}{k}\right) \right)$, if $\det(q)\equiv(-1)^{m+1}$ in $k_\nu^\times/k_\nu^{\times 2}$ then $\nu\in\Sigma^{ns}(K)$.

    
    Now consider the case that $F(x)=f(x)f_1(x)$ where $f_1(x)=x^2-(\zeta+\zeta^{-1})x+1$ and $\zeta$ is a root of unity. The field $k(\zeta)$ must have degree 2 over $k$.
    Note first that unless $k=\Q$ and $[K:k]=2$, neither $f(x)$ nor $f_1(x)$ become hyperbolic over the real place $\nu_0$. Indeed, in this case $\lambda$ has a pair of non-real Galois conjugates $e^{i\theta},e^{-i\theta}$, and thus over $\nu_0$ $f_1(x)$ remains irreducible and $f(x)$ has a unique type 1 factor $x^2-(e^{i\theta}+e^{-i\theta})x+1$.
    If $k=\Q$ and $[K:k]=2$, then $K$ and $k(\zeta)$ are two distinct quadratic extensions of $\Q$ and there is a rational prime (in fact, infinitely many) which remains inert in both extensions. 
    Therefore, in all cases there exists a place $\nu$ of $k$ which is non-split in the splitting fields of $K$ and $k(\zeta)$. We can apply \cite[Corollary 12.6]{Bayer-Fluckinger} which says that the quadratic space $(V,q)$ has an isometry with characteristic polynomial $F$ if and only if the conditions {\it (i)-(iii)} in \Cref{BF f0 0} are satisfied.

    Condition {\it (i)} is satisfied if and only if $\det(q)(4-\left(\zeta+\zeta^{-1})^2\right)\equiv D$ in $\kk$.

    Condition {\it (ii)} is satisfied by the proof of \Cref{lem:determinant neg}.

     If $m\equiv 0,3\bmod 4$, the Hasse invariant of the quadratic form $\rho^{m+1}_{m+1}$ has empty ramification (see \Cref{eq:2m hyp form}). Then by \Cref{lem: BF splitting hyperbolic}, condition {\it (iii)} is satisfied if and only if for any $\nu \in\Ram\left(s(q)\right)$ either $\nu\in\Sigma^{ns}(K)$ or $\nu$ does not split in $k(\zeta)/k$.
     
     Suppose now that $m\equiv 1,2\bmod4$. In this case $s(\rho^{m+1}_{m+1})=\left(\tfrac{-1,-1}{k}\right)$ (see \Cref{eq:2m hyp form}). Then by \Cref{lem: BF splitting hyperbolic}, condition {\it (iii)} is satisfied if and only if for any $\nu\in\Ram\left(s(q)\cdot\left(\tfrac{-1,-1}{k}\right)\right)$ either $\nu\in\Sigma^{ns}(K)$ or $\nu$ does not split in $k(\zeta)/k$.
            
    By \Cref{lem:ktoRk}, a conjugate $\gamma$ of $T$ is contained in $\Or(q')_{\Ok}$ for some $q'\simeq q$. Note that $\det(\gamma)=\det(T)=F(0)=1$, so $\gamma\in\Gamma=\SO(q')_{\Ok}$ and thus 
    \[ \ell(\gamma)=\ell(T)=\log\lambda \in\Q\mathcal{L}(\SO(q)_{\Ok}). \] 
\end{proof}

\section{Infinitely many commensurability classes} \label{s:infcommclasses}

We are now able to construct infinitely many commensurability classes of arithmetic lattices realizing a Salem number $\lambda$.

We will need some additional notation.
\begin{itemize}
    \item $g(x)\in k[x]$ the minimal polynomial of $\lambda+\lambda^{-1}$ over $k$.
    \item $\delta=(-1)^{n(n+1)/2}D$ as an element in $\kk$.
    \item $h(x)=x^2-\delta\in k[x]$ the minimal polynomial of $\delta$ over $k$.
    \item $H=k(\sqrt{\delta})\cong k[x]/(h)$.
    \item $M=E'H$ the compositum of $E'$ and $H$.
    \item $L=K'M$ the compositum of $K'$ and $M$.
    \item For $r\in\N$, $Q_r=\langle1,\dots,1\rangle$ the standard quadratic form of signature $(r,0)$.
\end{itemize}
Note that the fields $H$, $M$, and $L$ are Galois extensions of $k$.

Over the splitting field (or over $\C$), $f$ factors as
\begin{equation} \label{eq:C factor}
    f(x)= (x-\lambda)(x-\lambda^{-1})(x-e^{i\theta_1})(x-e^{-i\theta_1})\cdots(x-e^{i\theta_{m-1}})(x-e^{-i\theta_{m-1}}),
\end{equation}
which equals
\begin{equation} \label{eq:trace factor}
    f= (x^2-(\lambda+\lambda^{-1})x+1)(x^2-2\cos\theta_1x+1)\cdots(x^2-2\cos\theta_{m-1}x+1) .
\end{equation}
Over the $\R$, $f$ factors as
\begin{equation} \label{eq:R factor}
    f= (x-\lambda)(x-\lambda^{-1})(x^2-2\cos\theta_1x+1)\cdots(x^2-2\cos\theta_{m-1}x+1) .
\end{equation}

As in \Cref{sec:criteria}, we will prove \Cref{main theorem} in several cases according to the difference $n+1-\deg(f)$ as well as the parity of $n$ in \Cref{prop:geq3 inf comm,prop:2 inf comm,prop:even inf comm,prop:odd 2 inf comm,prop:odd 0 inf comm}. As before, the case when $n+1-\deg(f)\geq 3$ is the easiest.

\begin{proposition} \label{prop:geq3 inf comm}
    If $n+1-\deg(f)\geq 3$ then every commensurability class of arithmetic lattice of simplest type in $\Isom(\H^n)$ defined over $k$ realizes $\lambda$.
\end{proposition}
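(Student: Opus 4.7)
The plan is as follows. Let $q$ be an admissible quadratic form over $k$ of signature $(n, 1)$ representing the given commensurability class; I will construct a hyperbolic isometry $T \in \SO(q)_k$ of translation length $\log\lambda$ and then apply \Cref{lem:ktoRk} to conjugate $T$ into $\SO(q')_{\Ok}$ for some $q' \simeq q$, which is commensurable with $\SO(q)_{\Ok}$. The approach is to build $T$ as $T = T_1 \oplus \Id_{q_2}$ via an orthogonal decomposition $q \simeq q_1 \oplus q_2$ over $k$, with $\dim(q_1) = \deg(f) = 2m_k$ and $\dim(q_2) = n+1-2m_k \geq 3$, where $T_1 \in \SO(q_1)_k$ has characteristic polynomial $f$. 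Such a $T$ has $\lambda$ as an eigenvalue and hence is hyperbolic of translation length $\log\lambda$.

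I construct $q_1$ over $k$ with signature $(2m_k-1, 1)$ at $\nu_0$ and positive definite at all other real embeddings, with determinant $\equiv D = f(1)f(-1) \bmod (k^\times)^2$, and with Hasse invariant chosen so that the hyperbolicity condition of \Cref{BF f0 0} holds. \Cref{lem:determinant neg} guarantees the sign constraints on $D$: $D < 0$ at $\nu_0$ matches signature $(2m_k-1, 1)$, and $D > 0$ at the other real places matches positive definiteness. The three conditions of \Cref{BF f0 0} applied to $q_1$ with $F = f$ are then satisfied: (i) the determinant condition by construction; (ii) the signature condition, since at $\nu_0$ one has $(2m_k-1, 1) \equiv (1, 1) \bmod 2$ with $\sigma_{\nu_0} = 1$, and at other real places $(2m_k, 0) \equiv (0, 0) \bmod 2$ with $\sigma_\nu = 0$; (iii) the hyperbolicity condition by arrangement of the Hasse invariant. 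Thus an isometry $T_1 \in \Or(q_1)_k$ with characteristic polynomial $f$ exists; since $\det(T_1) = f(0) = 1$, we have $T_1 \in \SO(q_1)_k$.

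The main technical obstacle is producing a positive definite form $q_2$ over $k$ of dimension $n+1-2m_k \geq 3$ whose determinant and Hasse invariant are such that $q_1 \oplus q_2 \simeq q$. This is where $\dim(q_2) \geq 3$ becomes essential: a positive definite form in $\geq 3$ variables has enough local freedom that its Hasse invariant may be prescribed arbitrarily, subject only to global Hasse reciprocity, and in particular can be chosen to cancel against $s(q_1)$ and $s(q)$ via the formula $s(q_1 \oplus q_2) = s(q_1)s(q_2)(\det q_1, \det q_2)$. With such $q_2$ in hand, the classification of quadratic forms over $k$ (matching dimension, real signatures, determinant mod squares, and Hasse invariant at every place) forces $q_1 \oplus q_2 \simeq q$; the isometry $T := T_1 \oplus \Id_{q_2}$ then lies in $\SO(q)_k$ and has $\lambda$ as an eigenvalue, and \Cref{lem:ktoRk} delivers the desired integral conjugate.
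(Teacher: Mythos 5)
Your proof is correct, but it takes a genuinely different route from the paper. The paper simply sets $F(x)=f(x)\cdot(x+1)^{n+1-\deg(f)}$ and applies \Cref{BF f0 deg3} to the \emph{mixed-type} polynomial $F$ directly; because $\deg(f_0)\geq 3$, that proposition says the \emph{only} obstruction is the signature condition, which is immediate from \Cref{lem:determinant neg}. You instead hand-build the decomposition $q\simeq q_1\oplus q_2$ with $\dim(q_1)=\deg(f)$, realize $f$ on $(V_1,q_1)$ via \Cref{BF f0 0} (the irreducible, type-1 case), and extend by the identity on a positive-definite $q_2$ whose determinant and Hasse invariant are chosen, via Hasse--Minkowski, to force $q_1\oplus q_2\simeq q$. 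In effect you re-derive the relevant piece of the content of \Cref{BF f0 deg3}: when a type-0 factor of degree $\geq 3$ is present, the determinant and hyperbolicity conditions disappear because the "slack" summand $q_2$ can absorb any discrepancy of invariants. The paper's argument is much shorter; yours is more self-contained in that it only invokes the irreducible case of Bayer-Fluckiger.

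One point in your argument is stated too loosely and should be tightened. You claim that a positive-definite form in $\geq 3$ variables over $k$ can have its Hasse invariant ``prescribed arbitrarily, subject only to global Hasse reciprocity.'' That is not quite right: positive-definiteness forces the Hasse invariant of $q_2$ to be trivial (split) at \emph{every} real place, and also forces $\det(q_2)$ to be totally positive; these are additional constraints beyond reciprocity. Fortunately both hold in your setup and need to be checked explicitly: $\det(q_2)\equiv \det(q)/D$ is totally positive since $\det(q)$ and $D$ are both negative at $\nu_0$ and both positive at every other real place (\Cref{lem:determinant neg} and admissibility of $q$); and $s(q_2)=s(q)\,s(q_1)\,(\det q_1,\det q_2)$ is split at every real place because each of the three factors is (the admissible signatures $(n,1)$, $(2m_k-1,1)$, $(n+1,0)$, $(2m_k,0)$ all give trivial local Hasse invariant, and $(\det q_1,\det q_2)$ is split at $\nu_0$ since $\det q_2>0$ there and at the other real places since both are positive there). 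With these verifications added, the remaining local freedom at the finite places is indeed enough, and your argument goes through.
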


\begin{proof}
    This follows immediately from \Cref{prop:dif3 conditions}.
\end{proof}

The cases with $n+1-\deg(f)\leq 2$ will be treated depending on the parity of $n$.

Whenever $M$ is a field extension of $k$, denote by $\mathrm{Spl}(M/k)$ the set of prime ideals in $k$ that split completely in $M$. Whenever $S$ is any set of prime ideals in $k$ (usually finite), $\mathrm{Spl}_S(M/k):=\mathrm{Spl}(M/k)-S$.
The following lemma is a well-known extension of a theorem originally due to Bauer. We include a proof using the Chebotarev density theorem \cite{Chebotarev}.

\begin{lemma} \label{Bauer}
    Let $L$ and $M$ be Galois extensions of $k$. Suppose $S$ is a finite set of prime ideals in $k$. Then $L\subseteq M$ if and only if $\mathrm{Spl}_S(M/k)\subseteq \mathrm{Spl}_S(L/k)$.
\end{lemma}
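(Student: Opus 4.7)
The forward direction is routine: if $L\subseteq M$ and $\mathfrak{p}$ splits completely in $M$, then every prime of $M$ above $\mathfrak{p}$ has trivial ramification index and residue degree, hence the same holds for its restriction to the intermediate field $L$, so $\mathfrak{p}\in \mathrm{Spl}(L/k)$. Removing the finite set $S$ from both sides preserves the inclusion, giving $\mathrm{Spl}_S(M/k)\subseteq \mathrm{Spl}_S(L/k)$.

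For the reverse direction, my plan is to compare natural densities of splitting sets using the Chebotarev density theorem applied to the compositum $LM$ (which is again Galois over $k$). The key identity I will invoke is
\[ \mathrm{Spl}(LM/k)=\mathrm{Spl}(L/k)\cap \mathrm{Spl}(M/k), \]
which follows from the injection $\mathrm{Gal}(LM/k)\hookrightarrow \mathrm{Gal}(L/k)\times \mathrm{Gal}(M/k)$ together with the characterization of completely split (unramified) primes in a Galois extension as those whose Frobenius conjugacy class is trivial: triviality in the product group is triviality in each factor.

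Granting the hypothesis $\mathrm{Spl}_S(M/k)\subseteq \mathrm{Spl}_S(L/k)$, this says $\mathrm{Spl}(M/k)\subseteq \mathrm{Spl}(L/k)\cup S$, so that $\mathrm{Spl}(M/k)$ and $\mathrm{Spl}(LM/k)=\mathrm{Spl}(L/k)\cap \mathrm{Spl}(M/k)$ differ by at most the finite set $S$. Since finite sets have natural density zero, Chebotarev's theorem forces
\[ \frac{1}{[M:k]} = \frac{1}{[LM:k]}, \]
so $[LM:k]=[M:k]$ and therefore $L\subseteq M$.

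There is no serious obstacle here; the only point requiring a little care is the bookkeeping of finite exceptional sets, namely the set $S$ together with the finitely many primes ramifying in $L$, $M$, or $LM$ that must be excluded when applying Chebotarev. All such contributions have density zero and can safely be discarded in the density comparison.
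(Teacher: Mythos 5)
Your proof is correct and follows essentially the same route as the paper's: both establish the forward direction directly, then use the identity $\mathrm{Spl}(LM/k)=\mathrm{Spl}(L/k)\cap\mathrm{Spl}(M/k)$ together with a Chebotarev density comparison to conclude $[LM:k]=[M:k]$ and hence $L\subseteq M$. The only cosmetic difference is that the paper writes everything with the $\mathrm{Spl}_S$ notation throughout, whereas you first unwind it and then note that the finite discrepancy set has density zero; the substance is identical.
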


\begin{proof}
    One direction is immediate. To prove the other direction, note first that 
    \[ \mathrm{Spl}_S(LM/k) = \mathrm{Spl}_S(L/k)\cap \mathrm{Spl}_S(M/k). \]
    Hence, if $\mathrm{Spl}_S(M/k)\subseteq \mathrm{Spl}_S(L/k)$ then $\mathrm{Spl}_S(LM/k)= \mathrm{Spl}_S(M/k)$. However, by the Chebotarev density theorem, these sets have density $1/[LM:k]=1/[M:k]$ and thus $LM=M$ and $L\subseteq M$.
\end{proof}

\begin{lemma}\label{lem: K not in M}
    $K\not\subseteq M$ except possibly in the case that $\deg_k(\lambda)=2$ and $\delta=-D$.
\end{lemma}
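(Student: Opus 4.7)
The plan is to prove the contrapositive: assuming $K\subseteq M$, deduce that $\deg_k(\lambda)=2$ and $\delta=-D$. Since $M=E'H$ is a compositum of Galois extensions of $k$, it is itself Galois over $k$; together with the fact that $K'$ is the Galois closure of $K/k$, the inclusion $K\subseteq M$ automatically yields $K'\subseteq M$. I would split the argument into two cases according to $m_k:=\deg_k(\lambda)/2$, with $m_k>1$ being the substantial case.

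In the case $m_k>1$, the first step is to rule out $K\subseteq E'$. The field $E'$ is totally real: the $\Q$-conjugates of $\mu=\lambda+\lambda^{-1}$ are $\mu$ and the $2\cos\theta_j$, all real, so the splitting field of $g$ (sitting inside the $\Q$-Galois closure of $\Q(\mu)$) is totally real. But $K'$ contains the non-real root $e^{i\theta_1}$ of $f$, so $K\subseteq E'$ would force $K'\subseteq E'$, a contradiction. Because $[K:E]=2$ is prime, this yields $[KE':E']=2$, and since $[M:E']\leq 2$, the inclusion $K\subseteq M$ forces $M=KE'=E'(\sqrt{\mu^2-4})$, equivalently $\delta(\mu^2-4)\in (E')^{\times 2}$.

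The main obstacle is excluding this last condition, and I would do so with a signature argument. Any square in the totally real field $E'$ must be positive under every real embedding of $E'$. Because $\mathrm{Gal}(E'/k)$ acts transitively on the roots of $g$, for each $j\geq 1$ there exists $\tau\in\mathrm{Gal}(E'/k)$ with $\tau(\mu)=2\cos\theta_j$; composing $\tau$ with the fixed embedding $E'\subseteq\R$ produces a real embedding $\sigma:E'\hookrightarrow\R$ extending $\nu_0$ with $\sigma(\mu)=2\cos\theta_j$. Under the identity embedding, $\mu^2-4=(\lambda-\lambda^{-1})^2>0$, while under $\sigma$ it becomes $-4\sin^2\theta_j<0$; meanwhile $\delta\in k$ has the same value under both embeddings since both restrict to $\nu_0$. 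Therefore $\delta(\mu^2-4)$ changes sign between the two embeddings, so it cannot be a square in $E'$---contradiction.

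In the remaining case $m_k=1$, the polynomial $g$ is linear, so $\mu\in k$, $E=E'=k$, and $M=H=k(\sqrt{\delta})$. From $f(x)=x^2-\mu x+1$ we get $K=k(\sqrt{\mu^2-4})$, and the computation $D=f(1)f(-1)=(2-\mu)(2+\mu)=-(\mu^2-4)$ gives $K=k(\sqrt{-D})$. Since $\lambda\notin k$, $K/k$ is a genuine quadratic extension, so $K\subseteq M$ forces $K=M$, equivalently $-D\equiv\delta$ in $\kk$, i.e.\ $\delta=-D$. This is precisely the exceptional case.
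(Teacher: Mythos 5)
Your proof is correct, and while it shares the paper's overall structure (the case split on $\deg_k(\lambda)$, the observation that $E'$ is totally real, and the reduction to $M=KE'=E'(\lambda)$ being forced by $K\subseteq M$), the decisive step in the main case $\deg_k(\lambda)\geq 4$ is genuinely different. The paper's proof concludes by a Galois-theoretic observation: $M$ is Galois over $k$ (being a compositum of Galois extensions), whereas $E'(\lambda)$ sits inside $\R$ and therefore cannot contain the non-real roots of the irreducible $f$, so it cannot equal $M$. You instead translate $M=E'(\lambda)$ into the statement $\delta(\mu^2-4)\in(E')^{\times 2}$ and rule it out by a signature computation: the identity embedding sends $\mu^2-4$ to $(\lambda-\lambda^{-1})^2>0$, while the real embedding extending $\nu_0$ obtained from any $\tau\in\mathrm{Gal}(E'/k)$ sending $\mu$ to a conjugate $2\cos\theta_j$ gives $-4\sin^2\theta_j<0$, yet $\delta\in k$ has the same value under both; hence $\delta(\mu^2-4)$ changes sign and cannot be a square in the totally real field $E'$. (Your "for each $j\geq 1$" should be "for each $j$ with $2\cos\theta_j$ a root of $g$," but you only need one such $j$, and $\deg(g)=m_k>1$ guarantees one exists.) The two routes have complementary virtues: the paper's Galois-closure argument is shorter and more conceptual, while yours is a concrete sign computation that makes no direct appeal to the non-normality of $E'(\lambda)$ and in fact gives a self-contained proof of that non-normality as a by-product. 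In the residual case $\deg_k(\lambda)=2$, your computation $D=(2-\mu)(2+\mu)=-(\mu^2-4)$, hence $K=k(\sqrt{-D})$ and $K\subseteq M$ forcing $\delta\equiv-D$ in $\kk$, is equivalent to (and a bit more transparent than) the paper's observation that $K$ is real while $k(\sqrt{D})$ is not.
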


\begin{proof}
    We first note that $E'$ is a real field (in fact, totally real) since the conjugates over $\Q$ of $\lambda+\lambda^{-1}$ are $2\cos\theta_j$ for $1\leq j\leq m-1$, and thus the conjugates over $k$ are some subset of those. By construction, $\lambda\not\in E'$. Then $E'(\lambda)$ is quadratic over $E'$.
    
    If $\delta$ is a square in $E'$, then $M=E'$ and it is clear that $K\not\subseteq M$.

    Now assume $\delta$ is not a square in $E'$. Then $M=E'(\sqrt{\delta})$ is quadratic over $E'$, as is $E'(\lambda)$. Assume that $K\subseteq M$, then $E'\subseteq E'(\lambda)\subseteq M$, which implies $E'(\lambda)=M$. 
 
    Suppose that $\deg_k(\lambda)=2$ and $\delta=D<0$. This implies that $k=E=E'$ and $f(x)=x^2-(\lambda+\lambda^{-1})x+1$. Then $K$ and $M$ are both of degree 2 over $E$, one of which is real and the other of which is not real, which is a contradiction.    

    Now suppose that $\deg_k(\lambda)\geq 4$. Then $\lambda$ has non-real conjugates over $k$. By construction $M$ is Galois over $k$. But $E'(\lambda)$ is not Galois over $k$ since it is real and hence does not contain any root of $f$ other than $\lambda$ and $\lambda^{-1}$. Again, this is a contradiction.
\end{proof}

We now consider the case of $n=2$, which gives a similar result to \cite[Corollary 12.2.10]{MRBook}.

\begin{proposition} \label{prop:2 inf comm}
    Let $n=2$. If $\deg(f)=2$ then there exist infinitely many commensurability classes of arithmetic lattice of simplest type in $\Isom(\H^2)$ defined over $k$ which realize $\lambda$.
\end{proposition}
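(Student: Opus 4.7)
Plan. Since $\deg(f) = 2$, we have $k = \Q(\lambda+\lambda^{-1}) = E$ and $K = k(\sqrt{-D})$, because $-D = (\lambda+\lambda^{-1})^2 - 4 = \disc(f)$. The strategy is to construct an infinite family of admissible forms $q_a = \langle 1, a, aD \rangle$ over $k$, parametrized by totally positive $a \in k^\times$, each admitting an isometry with characteristic polynomial $F(x) = f(x)(x-1)$, and then to separate commensurability classes using the Witt invariant via \Cref{Maclachlan}. By \Cref{lem:determinant neg}, $q_a$ has signature $(2,1)$ at the identity embedding and is positive definite at every other real embedding, so $q_a$ is admissible.

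For the isometry, I apply \Cref{Bf f0 deg 1} with $f_0 = x-1$: writing $q_a = \langle 1 \rangle \oplus q'_a$ with $q'_a = \langle a, aD \rangle$, the required identity $1 \equiv \det(q_a) \cdot f(1)f(-1) \bmod (k^\times)^2$ holds, and one reduces to showing that $q'_a$ admits an isometry with characteristic polynomial $f$ via \Cref{BF f0 0}. The determinant and signature conditions there are routine; for the hyperbolicity condition, compute $s(q'_a) = (a, aD)_k = (a, -D)_k$, which is automatically trivial at any place where $-D$ is a square, i.e., at every place that splits in $K/k$. By \Cref{lem: BF splitting hyperbolic} (using $k = E$), this ensures $f$ is not hyperbolic at any place where $s(q'_a)$ is nontrivial. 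Applying \Cref{lem:ktoRk} puts the resulting $k$-isometry into some $\SO(q'')_{\Ok}$ with $q'' \simeq q_a$, realizing $\lambda$. By \Cref{Maclachlan} with $n$ even, commensurability is controlled by $c(q_a)$; the rank-$3$ formula gives $c(q_a) = s(q_a)(-1, -\det q_a)_k = (a, -D)_k (-1, -D)_k = (-a, -D)_k \in \Br(k)$.

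It remains to exhibit a sequence of totally positive $a_i$ with the Brauer classes $(-a_i, -D)_k$ pairwise distinct. Fix a prime $\mathfrak{p}_1$ of $k$ inert in $K/k$, with narrow class $c_1 \in \mathrm{Cl}^+(k)$. Applying Chebotarev's theorem to the compositum of $K$ with the narrow Hilbert class field $H^+(k)$, one obtains infinitely many primes $\mathfrak{p}_i$ that are both inert in $K/k$ and of narrow class $c_1^{-1}$: the set of inert narrow classes is a union of cosets of an index-$2$ subgroup of $\mathrm{Cl}^+(k)$, and such cosets are closed under inversion. Choose a totally positive generator $a_i$ of the principal ideal $\mathfrak{p}_1 \mathfrak{p}_i$. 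Outside a fixed finite set of exceptional primes (ramified in $K/k$, dyadic, or dividing $-D$), $(a_i, -D)_{\mathfrak{q}}$ is trivial for $\mathfrak{q} \neq \mathfrak{p}_1, \mathfrak{p}_i$, while $(a_i, -D)_{\mathfrak{p}_i} = -1$ since $v_{\mathfrak{p}_i}(a_i) = 1$ at the inert prime $\mathfrak{p}_i$. Hence the Brauer classes $(-a_i, -D)_k$ are pairwise distinct, producing infinitely many commensurability classes. I expect the main technical step to be the joint Chebotarev argument in $K \cdot H^+(k)$, which requires the coset-inversion observation to handle both the case $K \cap H^+(k) = k$ and the case $K \subseteq H^+(k)$ uniformly.
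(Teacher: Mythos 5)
Your proof is correct and follows the same overall strategy as the paper: build admissible rank-$3$ forms of the shape $\langle 1,a,aD\rangle$ (the paper writes $\langle -\delta a, a, 1\rangle$ with $\delta=-D$, which is the same form up to reordering), reduce via Bayer-Fluckiger to the existence of an isometry of $\langle a,aD\rangle$ with characteristic polynomial $f$, observe that the hyperbolicity condition is automatic because $s(\langle a,aD\rangle)=(a,-D)_k$ can only ramify at places inert or ramified in $K=k(\sqrt{-D})$ (where $f$ is never hyperbolic by \Cref{lem: BF splitting hyperbolic}), and then distinguish commensurability classes by the Witt invariant $c(q_a)=(-a,-D)_k$ via \Cref{Maclachlan}. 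Your choice of $f_0=x-1$ rather than the paper's $x+1$ is cosmetic (and if anything handles the determinant sign a bit more transparently).

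The one genuinely different step is how you produce infinitely many $a$'s with distinct $(a,-D)_k$. You go through the narrow Hilbert class field: pick $\mathfrak{p}_1$ inert in $K$, use Chebotarev in $K\cdot H^+(k)$ (with the coset-inversion observation to handle $K\subseteq H^+(k)$) to find infinitely many $\mathfrak{p}_i$ inert in $K$ and in the narrow class $c_1^{-1}$, and take totally positive generators $a_i$ of $\mathfrak{p}_1\mathfrak{p}_i$. This works, but it is considerably heavier than what the paper does: the paper simply fixes a finite even set $\mathcal{A}$ of primes inert in $K$ and invokes O'Meara \cite[71:19]{Omeara} to produce $a\in k^\times$ with $\Ram\bigl(\frac{a,\delta}{k}\bigr)=\mathcal{A}$ directly; varying $\mathcal{A}$ over such sets immediately gives pairwise distinct Hasse (hence Witt) invariants. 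Your route buys a fully explicit $a$ at the cost of the narrow class group machinery; the paper's route outsources the local-global existence to a single classical theorem. Both are valid, but you could shorten your argument substantially by replacing the narrow-class-field paragraph with the O'Meara-style existence statement, which is exactly the local-to-global principle you are re-deriving by hand.
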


\begin{proof}
    Under these hypothesis, $K=L$, $E=k$, $M=H$, and $\delta=-D$. If $K=M$ then $\Sigma^{ns}(K)-\mathrm{Spl}(H/k)=\Sigma^{ns}(K)$. Otherwise, $K$ and $M$ are distinct Galois extensions of $k$, both of degree two. It follows from \Cref{Bauer} and the Chebotarev density theorem that in either case we have $\#\left(\Sigma^{ns}(K)-\mathrm{Spl}(M/k)\right)=\infty$. 
    
    Choose a finite subset of even cardinality $\mathcal{A}$ satisfying
    \[ \mathcal{A} \subseteq \left(\Sigma^{ns}(K)-\mathrm{Spl}(M/k)\right) .  \]

    Whenever $\nu\in\mathcal{A}$, $\nu\not\in \mathrm{Spl}(k(\sqrt{\delta})/k)$ since $M=k(\sqrt{\delta})$. This means $\delta$ is a non-square at $k_\nu$. Therefore, by \cite[Theorem 71:19]{Omeara}, there exists $a\in k^\times$ such that the quaternion algebra $\left( \frac{a,\delta}{k} \right)$ is ramified exactly at $\mathcal{A}$. 
    Define the admissible quadratic form
    \[ q=\langle -\delta a, a, 1\rangle \]
    which has Hasse invariant $\left( \frac{a,-a\delta}{k} \right)\cong \left( \frac{a,\delta}{k} \right)$.

    By construction, $\Ram\left(s(q)\right)\subseteq\Sigma^{ns}(K)$. Thus by \Cref{prop: dif 1 conditions}, $\lambda$ is realized in the commensurability class of $\SO(q)_{\Ok}$. Varying over different choices of $\mathcal{A}$, we get infinitely many quadratic forms $q$'s with pairwise distinct Hasse invariant, and hence pairwise distinct Witt invariant. By \Cref{Maclachlan}, this gives infinitely many commensurability classes.
\end{proof}

\begin{proposition} \label{prop:even inf comm}
    Let $n\geq 4$ be even. If $n+1-\deg(f)=1$ then there exist infinitely many commensurability classes of arithmetic lattice of simplest type in $\Isom(\H^n)$ defined over $k$ which realize $\lambda$.
\end{proposition}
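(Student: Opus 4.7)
The strategy is to apply \Cref{Bf f0 deg 1} with $F(x)=f(x)(x+1)$ of degree $n+1=\dim(q)$: an admissible quadratic form $q$ of signature $(n,1)$ admits an isometry in $\Or(q)_k$ with characteristic polynomial $F$ if and only if $q\simeq\langle d_0\rangle\oplus q'$ with $d_0=\det(q)\cdot D$ and $q'$ of rank $n$ admitting an isometry with characteristic polynomial $f$. By \Cref{BF f0 0}, this reduces to three conditions on $q'$: (i) $\det(q')\equiv D$ in $\kk$, automatic from $\det(q)=d_0\det(q')$; (ii) the signature condition, which holds since $n$ is even (computing the $\sigma_\nu$ as in the proof of \Cref{lem:determinant neg}); and (iii) the hyperbolicity condition. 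At every real place of $k$, $f$ is non-hyperbolic because $\deg(f)=n\geq 4$ forces at least one multiplicity-one irreducible type-1 factor in its factorization over $k_\nu$ (the factors $(x^2-2\cos\theta_j x+1)$ with $\theta_j\in(0,\pi)$ all appear with multiplicity one). So by \Cref{lem: BF splitting hyperbolic}, (iii) reduces to $S\cap\Omega_{fin}\subseteq \Sigma^{ns}(K)$, where $S$ is the finite set of places at which the Hasse invariants of $q'$ and $\rho^{n/2}_{n/2}$ disagree.

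For each finite $\mathcal{A}\subseteq \Sigma^{ns}(K)$ whose cardinality has the parity forced by the global product formula (together with the real-place contribution computed from \Cref{eq:2m hyp form}), I construct an admissible form $q_\mathcal{A}=\langle 1\rangle\oplus q'_\mathcal{A}$ of signature $(n,1)$ with $\det(q_\mathcal{A})\equiv D$ (so that $d_0\equiv D^2\equiv 1$ is compatible with the chosen summand $\langle 1\rangle$), where $q'_\mathcal{A}$ has rank $n$, signature $(n-1,1)$ at $\nu_0$, is positive definite at the other real places, has $\det(q'_\mathcal{A})\equiv D$, and whose Hasse invariant at each finite place differs from that of $\rho^{n/2}_{n/2}$ exactly at the primes in $\mathcal{A}$. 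The existence of such $q'_\mathcal{A}$ follows from the Hasse--Minkowski classification of rank-$\geq 3$ quadratic forms over a number field: the real-place Hasse invariants are determined by the prescribed signatures, while the finite-place Hasse invariants may be freely prescribed subject only to the global product formula.

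Since $\deg_k(\lambda)=n\geq 4$, \Cref{lem: K not in M} gives $K\not\subseteq M$, and by \Cref{Bauer} combined with the Chebotarev density theorem, $\Sigma^{ns}(K)$ has positive density; in particular it contains finite subsets of each required parity. Distinct choices of $\mathcal{A}$ yield distinct Hasse invariants $s(q_\mathcal{A})=s(q'_\mathcal{A})$, and by \Cref{eq:WittHasse} the adjustment from $s(q_\mathcal{A})$ to the Witt invariant $c(q_\mathcal{A})$ depends only on $n$ and the fixed $\det(q_\mathcal{A})\equiv D$, so the $c(q_\mathcal{A})$ are pairwise distinct. The even-$n$ case of \Cref{Maclachlan} then produces infinitely many commensurability classes. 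Finally, $F\in\Ok[x]$ and $F(0)=1$ (as $f$ is monic reciprocal of even degree), so \Cref{lem:ktoRk} promotes the $k$-rational isometry with characteristic polynomial $F$ to an orientation-preserving element of some $\SO(\tilde q_\mathcal{A})_{\Ok}$ with $\tilde q_\mathcal{A}\simeq q_\mathcal{A}$, realizing $\lambda$. The main technical hurdle is the local-to-global construction of $q'_\mathcal{A}$ with prescribed Hasse invariants, and in particular verifying that the parity constraint imposed by the product formula is always realizable by finite subsets of $\Sigma^{ns}(K)$.
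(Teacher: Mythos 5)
Your proposal follows essentially the same route as the paper: the same choice $F(x)=f(x)(x+1)$, the same reduction via \Cref{Bf f0 deg 1} to a rank-$n$ form $q'$ carrying $f$, the same verification that condition (i) comes down to $\det(q')\equiv D$, that (ii) holds because $n$ is even, and that (iii) holds if $S\cap\Omega_{fin}\subseteq\Sigma^{ns}(K)$ (with the real places of $S$ harmless since $\deg_k(f)\geq 4$ gives a multiplicity-one factor $x^2-2\cos\theta_j x+1$); and the same endgame of varying $\mathcal{A}$, using that adding the fixed correction term in \Cref{eq:WittHasse} sends distinct Hasse invariants to distinct Witt invariants, and invoking \Cref{Maclachlan}, \Cref{lem:ktoRk}, and $F(0)=1$.

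The one point you flag but do not close is exactly the parity constraint, and this is where the paper differs at the level of bookkeeping. The paper sidesteps the local-to-global existence question by writing $q=\langle -Da,-Db,Dab\rangle\oplus Q_{n-2}$ with $\left(\tfrac{a,b}{k}\right)=B_\mathcal{A}\cdot\left(\tfrac{-1,-D}{k}\right)$, so that $s(q')=B_\mathcal{A}$ and the admissibility of the signatures is automatic; the product formula is then absorbed into the classification of quaternion algebras (which only requires $\#\mathcal{A}$ even). The residual subtlety is that when $\tfrac{n}{2}\equiv 2,3\bmod 4$ one has $s(\rho^{n/2}_{n/2})=\left(\tfrac{-1,-1}{k}\right)$, which is ramified at every real place and possibly at finitely many finite places that need not lie in $\Sigma^{ns}(K)$; the paper handles this by requiring $\Ram_{fin}\left(\tfrac{-1,-1}{k}\right)\subseteq\mathcal{A}\subseteq\Sigma^{ns}(K)\cup\Ram_{fin}\left(\tfrac{-1,-1}{k}\right)$, so that $S_{fin}=\mathcal{A}-\Ram_{fin}\left(\tfrac{-1,-1}{k}\right)\subseteq\Sigma^{ns}(K)$, with the real places in $S$ discharged by non-hyperbolicity as you observed. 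Your Hasse--Minkowski construction can be made to work, but to close the parity gap you need this same dichotomy on $\tfrac{n}{2}\bmod 4$: depending on the case, the set of finite places where $s(q'_\mathcal{A})$ is forced to be nontrivial is $\mathcal{A}$ or $\mathcal{A}\,\Delta\,\Ram_{fin}\left(\tfrac{-1,-1}{k}\right)$, and in either case, since $\Sigma^{ns}(K)$ is infinite (Chebotarev), you can choose $\mathcal{A}\cap\Sigma^{ns}(K)$ of whichever parity is required, giving infinitely many admissible $\mathcal{A}$.
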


\begin{proof}
    We claim first that $\#\Sigma^{ns}(K)=\infty$.

    The extension $K/E$ is a Galois extension of degree 2. By the Chebotarev density theorem, the set of prime ideals $\wp$ in $E$ which remain inert in $K$ has density $1/2$. At most $[E:k]$-many such $\wp$ can lie over the same prime ideal $\mathfrak{p}=\wp\cap \Ok$ of $k$. Thus $\Sigma^{ns}(K)$ is an infinite set.
 
    Choose a finite subset of even cardinality $\mathcal{A}$ satisfying
    \[ \mathcal{A} \subseteq\Sigma^{ns}(K), \text{ if } \frac{n}{2}\equiv 0,1\bmod4  \]
    and 
    \[ \Ram_{fin}\left(\tfrac{-1,-1}{k}\right)\subseteq\mathcal{A} \subseteq\Sigma^{ns}(K)\cup \Ram_{fin}\left(\tfrac{-1,-1}{k}\right), \text{ if } \frac{n}{2}\equiv 2,3\bmod4.  \]

    Let $B_\mathcal{A}$ be the quaternion algebra with ramification set $\mathcal{A}$, which only contains finite places. Choose $a,b\in k^\times$ so that
    \[ \left(\tfrac{a,b}{k}\right)=B_\mathcal{A}\cdot \left(\tfrac{-1,-D}{k}\right). \]
    Note that $\left(\tfrac{a,b}{k}\right)$ is ramified at all real places except $\nu_0$. By \Cref{lem:determinant neg}, at $\nu_0$ we have $D<0$ and at least one of $a$ or $b$ is positive. At any other real place $\nu$, $D>0$, $a<0$, and $b<0$.
    Define the admissible quadratic form 
    \[ q= \langle -Da,-Db,Dab \rangle \oplus Q_{n-2} \]
    which has Hasse invariant
    \[ s(q)=s\left(\langle -Da,-Db,Dab \rangle\right)
    =\left(\tfrac{a,b}{k}\right) \cdot \left(\tfrac{-1,-D}{k}\right) =B_\mathcal{A}
    .\]
    Note that $q$ has the correct signatures and is admissible. 

    By construction, if $\frac{n}{2}\equiv0,1\bmod4$, $\Ram\left(s(q)\right)\subseteq\mathcal{A}\subseteq \Sigma^{ns}(K)$. 

    If $\frac{n}{2}\equiv2,3\bmod4$, then by construction, $\Ram_{fin}\left(\left(\tfrac{-1,-1}{k}\right)\right) \subseteq \Ram\left(s(q)\right)=\mathcal{A}$  $\subseteq\Sigma^{ns}(K)$, so if $\nu\in\Ram\left(s(q)\cdot \left(\tfrac{-1,-1}{k}\right) \right)$ is a finite place, then $\nu\in\Sigma^{ns}(K)$.
    If $\nu$ is a real place then $f$ factors over $k_\nu$ into a subset of the factors in \Cref{eq:R factor}, with at least one factor of the type $(x^2-2\cos\theta_ix+1)$ (not repeated), since $\deg_k(f)\geq 4$. So $f$ is not hyperbolic at $k_\nu$ and hence by \Cref{lem: BF splitting hyperbolic}, $\nu\in\Sigma^{ns}(K)$.

    Thus by \Cref{prop: dif 1 conditions}, $\lambda$ is realized in the commensurability class of $\SO(q)_{\Ok}$.
    
    Varying over different choices of $\mathcal{A}$, we get infinitely many quadratic forms $q$'s with pairwise distinct Hasse and Witt invariants, and hence pairwise distinct Witt invariant. By \Cref{Maclachlan}, this gives infinitely many commensurability classes.
\end{proof}

\begin{proposition} \label{prop:odd 2 inf comm}
    Let $n\geq3$ be odd. If $n+1-\deg(f)=2$ then there exist infinitely many commensurability classes of arithmetic lattices of simplest type in $\Isom(\H^n)$ defined over $k$ which realize $\lambda$.
\end{proposition}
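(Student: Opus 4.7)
The plan is to apply \Cref{Bf f0 deg 2} with the polynomial $F(x) = f(x)(x-1)(x+1)$, a $(-1)$-symmetric polynomial of degree $n+1$ obtained by multiplying the irreducible type-$1$ factor $f$ by the type-$0$ factor $f_0(x) = x^2-1$, to produce an isometry $T$ with $\lambda$ as an eigenvalue on an admissible rank-$(n+1)$ quadratic form $q$ over $k$. Since $F(0) = -1$, this $T$ satisfies $\det T = -1$ and lies in $\Or(q)_{\Ok} \setminus \SO(q)_{\Ok}$; this is harmless, because $\Or(q)_{\Ok}$ is commensurable with $\SO(q)_{\Ok}$ in $\Isom(\H^n)$, so it suffices to realize $\lambda$ via an orientation-reversing element of this larger lattice. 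Infinitely many commensurability classes of such lattices will be produced by varying the Hasse invariant of $q$ while keeping $\disc q = \delta$, and distinguished via \Cref{Maclachlan}.

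The first step is to show that $\Sigma^{ns}(K) \cap \mathrm{Spl}(H/k)$ is infinite. The Chebotarev argument opening the proof of \Cref{prop:even inf comm} already gives that $\Sigma^{ns}(K)$ is infinite. \Cref{lem: K not in M} yields $K \not\subseteq M$, and since $H \subseteq M$ this also gives $K \not\subseteq H$; together with \Cref{Bauer}, a Chebotarev density argument on $\mathrm{Gal}(L/k)$ then shows that the conditions ``$\mathfrak{p} \in \Sigma^{ns}(K)$'' and ``$\mathfrak{p} \in \mathrm{Spl}(H/k)$'' are simultaneously realizable on a set of positive density.

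For each finite even subset $\mathcal{A} \subseteq \Sigma^{ns}(K) \cap \mathrm{Spl}(H/k)$, I construct $q_\mathcal{A} = \langle -Da, -Db, Dab\rangle \oplus Q_{n-2}$ verbatim as in \Cref{prop:even inf comm}, choosing $a, b \in k^{\times}$ so that $\left(\tfrac{a,b}{k}\right) = B_\mathcal{A} \cdot \left(\tfrac{-1,-D}{k}\right)$ and with the sign conditions at real places required for admissibility. This gives $\det q_\mathcal{A} \equiv D$ in $\kk$ (hence $\disc q_\mathcal{A} = \delta$) and $s(q_\mathcal{A}) = B_\mathcal{A}$. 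To invoke \Cref{Bf f0 deg 2}, condition (i) follows immediately from $n$ being odd and the analysis in the proof of \Cref{lem:determinant neg}. Condition (ii) asks that at every finite $\nu$ where $f$ is hyperbolic in $k_\nu[x]$, the Witt index of $q_{\mathcal{A},\nu}$ be at least $(n-1)/2$; by \Cref{lem: BF splitting hyperbolic}, any such $\nu$ lies outside $\Sigma^{ns}(K)$, hence outside $\mathcal{A}$, so $s(q_{\mathcal{A},\nu})$ is trivial. Since $\dim q_\mathcal{A} = n+1$ is even, (ii) reduces to the statement that the anisotropic part of $q_{\mathcal{A},\nu}$ has dimension at most $2$, the only obstruction being $q_{\mathcal{A},\nu} \simeq H^{(n-3)/2} \oplus N_\nu$ for the local $4$-dimensional anisotropic form $N_\nu$.

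The hard part will be ruling out this $4$-dimensional anisotropic case at the ``problematic'' places, namely those $\nu$ where $\det q_\mathcal{A} \equiv (-1)^{(n-3)/2}$ modulo squares in $k_\nu$ and the local $(-1,-1)$ Hilbert symbol forces the Hasse of $H^{(n-3)/2} \oplus N_\nu$ to coincide with the trivial value of $s(q_{\mathcal{A},\nu})$. Following the subcase $n/2 \equiv 2,3 \pmod{4}$ of \Cref{prop:even inf comm}, I expect to resolve this by enlarging $\mathcal{A}$ by a fixed finite even set of forced primes (determined by $(n-3)/2 \bmod 4$ and $\Ram_{fin}\left(\tfrac{-1,-1}{k}\right)$) chosen outside $\mathrm{Spl}(H/k)$, so that the local Hasse at each problematic place is shifted away from the bad value without affecting the commensurability-distinction argument. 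Once $T$ exists, \Cref{lem:ktoRk} produces a conjugate in $\Or(q')_{\Ok}$ for some $q' \simeq q_\mathcal{A}$, realizing $\lambda$. Finally, the variable part of $\mathcal{A}$ lies entirely in $\mathrm{Spl}(H/k)$, so by the remark following \Cref{Maclachlan}, each $\nu$ in the symmetric difference of two choices contributes ramification at both primes of $H$ above $\nu$ to $c(q_\mathcal{A}) \otimes_k H$; distinct $\mathcal{A}$'s thus give pairwise non-commensurable lattices by \Cref{Maclachlan}.
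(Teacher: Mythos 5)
Your overall strategy mirrors the paper's---apply \Cref{Bf f0 deg 2} to $F = f \cdot f_0$ with $f_0$ quadratic of type $0$, build $q_{\mathcal{A}} = \langle -Da,-Db,Dab\rangle \oplus Q_{n-2}$ from a quaternion algebra $B_{\mathcal{A}}$ with variable ramification, and separate classes by \Cref{Maclachlan} and \Cref{lem:Mac odd non comm} using primes that split in $H$. However, there are two concrete places where your proposal departs from the paper in ways that create real gaps.

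First, the choice $f_0(x)=(x-1)(x+1)$ instead of $(x+1)^2$ is an unnecessary complication that introduces trouble you then have to argue away. The paper takes $F(x)=f(x)(x+1)^2$, so $F(0)=1$, $\det T = 1$, and the resulting element lies in $\SO(q')_{\Ok}$ directly; no detour through $\Or$ versus $\SO$, no orientation-reversing isometry, and no question about whether the Bayer-Fluckiger propositions (which the paper sets up for \emph{symmetric} polynomials, not $(-1)$-symmetric ones---see the preamble to the signature condition) apply to your $F$. The signature and Witt-index conditions are identical for both choices of $f_0$ since $\pm 1$ lie on the unit circle, so switching to $(x+1)^2$ is free.

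Second, and more seriously, your verification of condition (ii) is a sketch rather than a proof, and the sketch does not identify the mechanism that actually closes the case. You correctly reduce to ruling out a $4$-dimensional anisotropic kernel at places $\nu$ where $f$ is hyperbolic, but then defer the ``hard part'' to a vague plan of enlarging $\mathcal{A}$ by forced primes. What you are missing is the determinant calculation that the paper relies on: if $f$ is hyperbolic in $k_\nu[x]$, then $f = p p^*$ with $\deg p = (n-1)/2$, giving
\[
D \equiv f(1)f(-1) \equiv (-1)^{(n-1)/2}\, p(1)^2 p(-1)^2 \quad \text{in } k_\nu^{\times}/k_\nu^{\times 2},
\]
so $\det q_{\nu} \equiv (-1)^{(n-1)/2}$ mod squares. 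A $4$-dimensional anisotropic kernel forces $\det q_\nu \equiv (-1)^{(n-3)/2}$ (the unique $4$-dimensional anisotropic form over a $p$-adic field has trivial determinant), so this case can occur only when $-1 \in k_\nu^{\times 2}$; but then the Hasse invariant of such a form is the local division algebra while $s(q_\nu)$ is trivial (since $\nu \notin \Sigma^{ns}(K)$ forces $\nu \notin \mathcal{A}$ outside of $\Ram_{fin}\bigl(\tfrac{-1,-1}{k}\bigr)$, and the latter set contains no places where $-1$ is a square). Without this determinant identity you have no handle on the local isomorphism type of $q_\nu$, and ``shifting the Hasse invariant by forced primes chosen outside $\mathrm{Spl}(H/k)$'' cannot be carried out as stated, because the primes in $\Ram_{fin}\bigl(\tfrac{-1,-1}{k}\bigr)$ are fixed and you cannot decide on which side of $\mathrm{Spl}(H/k)$ they land. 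The paper sidesteps this entirely: it adds $\Ram_{fin}\bigl(\tfrac{-1,-1}{k}\bigr)$ to $\mathcal{A}$ when $\tfrac{n-1}{2}\equiv 2,3\pmod 4$ without any splitting constraint, and bases the incommensurability argument only on the variable primes, which are taken from $\Sigma^{ns}(K)\cap \mathrm{Spl}(H/k)$.
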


\begin{proof}
    As in the beginning of the proof of \Cref{prop:2 inf comm}, it follows from \Cref{Bauer,lem: K not in M} that 
    \[ \#\left(\Sigma^{ns}(K)\cap \mathrm{Spl}(H/k)\right)=\infty. \]
    Choose $\mathcal{A}$ to be a finite subset of even cardinality such that if $ \frac{n-1}{2}\equiv 0,1\bmod4$, then
    \[ \mathcal{A}\subseteq\Sigma^{ns}(K)\cap \mathrm{Spl}(H/k)  \]
    and if $\frac{n-1}{2}\equiv 2,3\bmod4$, then
    \[ \Ram_{fin}\left(\left(\tfrac{-1,-1}{k}\right)\right) \subseteq \mathcal{A}\subseteq \left(\Sigma^{ns}(K)\cap \mathrm{Spl}(H/k)\right)\cup \Ram_{fin}\left(\left(\tfrac{-1,-1}{k}\right)\right).  \]

    Let $B_\mathcal{A}$ be the quaternion algebra with ramification set $\mathcal{A}$, which only contains finite places. Choose $a,b\in k^\times$ so that
    \[ \left(\tfrac{a,b}{k}\right)=B_\mathcal{A}\cdot \left(\tfrac{-1,-D}{k}\right). \]
    Note that $\left(\tfrac{a,b}{k}\right)$ is ramified at all real places except $\nu_0$. By \Cref{lem:determinant neg}, at $\nu_0$ we have $D<0$ and at least one of $a$ or $b$ is positive. At any other real place $\nu$, $D>0$, $a<0$, and $b<0$.
    Define the admissible quadratic form
    \[ q=\langle -Da,-Db,Dab\rangle \oplus Q_{n-2} \]
    which has Hasse invariant
    \[ s(q)=s\left(\langle -Da,-Db,Dab \rangle\right)
    =\left(\tfrac{a,b}{k}\right) \cdot \left(\tfrac{-1,-D}{k}\right)=B_\mathcal{A}
    .\]
    Note that $q$ has the correct signatures and is admissible. 

    If $\frac{n-1}{2}\equiv 0,1\bmod4$, $\Ram\left(s(q)\right)=\mathcal{A}\subseteq\Sigma^{ns}(K)$. 
   
    If $\frac{n-1}{2}\equiv2,3\bmod4$, then by construction, $\Ram_{fin}\left(\left(\tfrac{-1,-1}{k}\right)\right) \subseteq \Ram\left(s(q)\right)=\mathcal{A}$  $\subseteq\Sigma^{ns}(K)$, so if $\nu\in\Ram\left(s(q)\cdot \left(\tfrac{-1,-1}{k}\right) \right)$ is a finite place, then $\nu\in\Sigma^{ns}(K)$.
    If $\nu$ is a real place then $f$ factors over $k_\nu$ into a subset of the factors in \Cref{eq:R factor}, with at least one factor of the type $(x^2-2\cos\theta_ix+1)$ (not repeated), since in this case $n\geq5$ and $\deg(f)\geq 4$. So $f$ is not hyperbolic at $k_\nu$ and hence by \Cref{lem: BF splitting hyperbolic}, $\nu\in\Sigma^{ns}(K)$.

    Thus by \Cref{prop:dif 2 conditions}, $\lambda$ is realized in the commensurability class of $\SO(q)_{\Ok}$.
       
    Varying over different choices of $\mathcal{A}$, we get infinitely many quadratic forms $q$'s with pairwise distinct Hasse and Witt invariants. Furthermore, if $\mathfrak{p}\in \mathcal{A}$ but $\mathfrak{p}\not\in \mathcal{A}'$ for two such $q$ and $q'$ constructed as above, then since $\mathfrak{p}\in \mathrm{Spl}(H/k)$, by \Cref{lem:Mac odd non comm} $\SO(q)_{\mathcal{O}_k}$ and $\SO(q')_{\mathcal{O}_k}$ are incommensurable. 
    
    Again, this gives infinitely many commensurability classes. 
\end{proof}

\begin{proposition} \label{prop:odd 0 inf comm}
    Let $n\geq3$ be odd. If $n+1=\deg(f)$ then there exist infinitely many commensurability classes of arithmetic lattice of simplest type in $\Isom(\H^n)$ defined over $k$ which realize $\lambda$.
\end{proposition}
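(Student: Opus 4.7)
The approach parallels \Cref{prop:odd 2 inf comm}, but now with $F = f$ itself (no additional type-0 factor), so \Cref{BF f0 0} rather than \Cref{Bf f0 deg 2} is the relevant existence criterion for isometries. The plan is to construct a family of admissible quadratic forms $q$ of signature $(n,1)$ over $k$, indexed by a finite even subset $\mathcal{A}$ of finite prime ideals in $\Sigma^{ns}(K) \cap \mathrm{Spl}(H/k)$, each satisfying the hypotheses of \Cref{BF f0 0} applied to $f$. The resulting isometry $T$ will have $\det T = f(0) = 1$ (the product of roots of $f$, which are $\lambda,\lambda^{-1}$ together with reciprocal pairs on the unit circle) and eigenvalues $\lambda, \lambda^{-1}$ plus $n-1$ eigenvalues of modulus one, so by \cite{ChenGreenberg} it is an orientation-preserving hyperbolic isometry with translation length $\log\lambda$. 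A conjugate of $T$ then lies in some $\SO(q')_{\Ok}$ with $q' \simeq q$ by \Cref{lem:ktoRk}.

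To set up the family, note first that $\deg_k(\lambda) = n+1 \geq 4$, so \Cref{lem: K not in M} gives $K \not\subseteq M$; combined with \Cref{Bauer} and the Chebotarev density theorem (exactly as at the start of \Cref{prop:odd 2 inf comm}), this yields $\#(\Sigma^{ns}(K) \cap \mathrm{Spl}(H/k)) = \infty$. Set $m_k = (n+1)/2$ and choose $\mathcal{A}$ of even cardinality with
\[
\mathcal{A} \subseteq \Sigma^{ns}(K) \cap \mathrm{Spl}(H/k) \quad \text{if } m_k \equiv 0,1 \bmod 4,
\]
and with $\Ram_{fin}\left(\tfrac{-1,-1}{k}\right) \subseteq \mathcal{A} \subseteq \left(\Sigma^{ns}(K) \cap \mathrm{Spl}(H/k)\right) \cup \Ram_{fin}\left(\tfrac{-1,-1}{k}\right)$ otherwise. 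Let $B_\mathcal{A}$ be the quaternion algebra with ramification set $\mathcal{A}$, choose $a,b \in k^\times$ with $\left(\tfrac{a,b}{k}\right) = B_\mathcal{A} \cdot \left(\tfrac{-1,-D}{k}\right)$, and define
\[
q = \langle -Da, -Db, Dab\rangle \oplus Q_{n-2}.
\]
Just as in \Cref{prop:even inf comm,prop:odd 2 inf comm}, the signs of $a,b$ at real places (forced by \Cref{lem:determinant neg}) make $q$ admissible of signature $(n,1)$, with $\det q \equiv D \equiv f(1)f(-1)$ in $\kk$ and $s(q) = B_\mathcal{A}$.

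Conditions (i) and (ii) of \Cref{BF f0 0} are immediate (the signature condition follows from \Cref{lem:determinant neg}, and (i) is the determinant computation above), so the main obstacle is the hyperbolicity condition (iii). By \Cref{eq:2m hyp form}, the set $S$ at which $s(q)$ and $s(\rho^{m_k}_{m_k})$ disagree is either $\mathcal{A}$ when $m_k \equiv 0, 1 \bmod 4$, or $\Omega_\infty \cup \left(\mathcal{A} - \Ram_{fin}\left(\tfrac{-1,-1}{k}\right)\right)$ when $m_k \equiv 2, 3 \bmod 4$. For finite places in $S$, the containment in $\Sigma^{ns}(K)$ gives non-hyperbolicity of $f$ via \Cref{lem: BF splitting hyperbolic}. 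At the real places in $S$ (only relevant in the second case), $f$ factors over $\R$ into distinct type-1 quadratics $x^2 - 2\cos\theta_j x + 1$ together with possibly the type-2 factor $(x-\lambda)(x-\lambda^{-1})$ at $\nu_0$; since $\deg f = n+1 \geq 4$ there is at least one type-1 factor of multiplicity one, so $f$ is not hyperbolic. Thus (iii) holds and \Cref{BF f0 0} produces the desired $T \in \SO(q)_k$.

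Finally, distinct commensurability classes are separated exactly as in \Cref{prop:odd 2 inf comm}. The correction relating $s(q)$ to the Witt invariant $c(q)$ in \Cref{eq:WittHasse} depends only on the rank $n+1$ and the class of $\det q \equiv D$, which are the same for all forms in our family. Consequently, if $\mathfrak{p}$ lies in $\mathcal{A}$ but not in a second choice $\mathcal{A}'$, then $\mathfrak{p} \in \Ram(c(q))$ but $\mathfrak{p} \not\in \Ram(c(q'))$, and since $\mathfrak{p} \in \mathrm{Spl}(H/k)$ by construction, \Cref{lem:Mac odd non comm} rules out commensurability of $\SO(q')_{\Ok}$ and the analogous lattice for $\mathcal{A}'$. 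Varying $\mathcal{A}$ over the infinite set $\Sigma^{ns}(K) \cap \mathrm{Spl}(H/k)$ thus yields infinitely many commensurability classes of arithmetic lattices defined over $k$ realizing $\lambda$.
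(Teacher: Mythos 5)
Your proof is correct and takes essentially the same route as the paper: same reduction to $\#\left(\Sigma^{ns}(K)\cap \mathrm{Spl}(H/k)\right)=\infty$ via \Cref{lem: K not in M} and \Cref{Bauer}, the same admissible form $q=\langle -Da,-Db,Dab\rangle\oplus Q_{n-2}$ with $s(q)=B_{\mathcal{A}}$, the same application of \Cref{BF f0 0} with $F=f$, and the same separation of commensurability classes via \Cref{lem:Mac odd non comm}. You add a couple of small clarifications the paper leaves implicit (that the Witt--Hasse correction factor from \Cref{eq:WittHasse} is constant across the family since rank and determinant class are fixed, and the eigenvalue/orientation check), but the structure and key lemmas are identical.
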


\begin{proof} 
    It follows from \Cref{Bauer,lem: K not in M} that
    \[ \#\left(\Sigma^{ns}(K)\cap \mathrm{Spl}(H/k)\right)=\infty. \]
    Choose $\mathcal{A}$ to be a finite subset of even cardinality such that if $ \frac{n+1}{2}\equiv 0,1\bmod4$, then
    \[ \mathcal{A}\subseteq \Sigma^{ns}(K)\cap \mathrm{Spl}(H/k)  \]
    and if $\frac{n+1}{2}\equiv 2,3\bmod4$, then
    \[ \Ram_{fin}\left(\tfrac{-1,-1}{k}\right)\subseteq \mathcal{A}\subseteq\left(\Sigma^{ns}(K)\cap \mathrm{Spl}(H/k)\right)\cup \Ram_{fin}\left(\tfrac{-1,-1}{k}\right).  \]
    
    Let $B_\mathcal{A}$ be the quaternion algebra with ramification set $\mathcal{A}$, which only contains finite places. Choose $a,b\in k^\times$ so that
    \[ \left(\tfrac{a,b}{k}\right)=B_\mathcal{A}\cdot \left(\tfrac{-1,-D}{k}\right). \]
    Note that $\left(\tfrac{a,b}{k}\right)$ is ramified at all real places except $\nu_0$. By \Cref{lem:determinant neg}, at $\nu_0$ we have $D<0$ and at least one of $a$ or $b$ is positive. At any other real place $\nu$, $D>0$, $a<0$, and $b<0$.
    Define the admissible quadratic form
    \[ q=\langle -Da,-Db,Dab\rangle \oplus Q_{n-2} \]
    which has Hasse invariant
    \[ s(q)=s\left(\langle -Da,-Db,Dab \rangle\right)
    =\left(\tfrac{a,b}{k}\right) \cdot \left(\tfrac{-1,-D}{k}\right)=B_\mathcal{A}
    .\]
    Note that $q$ has the correct signatures and is admissible.

    If $\frac{n+1}{2}\equiv 0,1\bmod4$, $\Ram\left(s(q)\right)=\mathcal{A}\subseteq\Sigma^{ns}(K)$. 
   
    If $\frac{n+1}{2}\equiv2,3\bmod4$, then by construction, $\Ram_{fin}\left(\left(\tfrac{-1,-1}{k}\right)\right) \subseteq \Ram\left(s(q)\right)=\mathcal{A}$  $\subseteq\Sigma^{ns}(K)$, so if $\nu\in\Ram\left(s(q)\cdot \left(\tfrac{-1,-1}{k}\right) \right)$ is a finite place, then $\nu\in\Sigma^{ns}(K)$.
    If $\nu$ is a real place then $f$ factors over $k_\nu$ into a subset of the factors in \Cref{eq:R factor}, with at least one factor of the type $(x^2-2\cos\theta_ix+1)$ (not repeated), since in this case $n\geq3$ and $\deg(f)\geq 4$. So $f$ is not hyperbolic at $k\nu$ and hence by \Cref{lem: BF splitting hyperbolic}, $\nu\in\Sigma^{ns}(K)$.

    Thus by \Cref{prop:dif 0 conditions}, $\lambda$ is realized in the commensurability class of $\SO(q)_{\Ok}$.
       
    Varying over different choices of $\mathcal{A}$, we get infinitely many quadratic forms $q$'s with pairwise distinct Hasse and Witt invariants. Furthermore, if $\mathfrak{p}\in \mathcal{A}$ but $\mathfrak{p}\not\in \mathcal{A}'$ for two such $q_1$ and $q_2$ constructed as above, then since $\mathfrak{p}\in \mathrm{Spl}(H/k)$, by \Cref{lem:Mac odd non comm} $\SO(q_1)_{\mathcal{O}_k}$ and $\SO(q_2)_{\mathcal{O}_k}$ are incommensurable. 
\end{proof}

\bibliographystyle{alpha}
\bibliography{biblio}

\begin{thebibliography}{CHLR08}

\bibitem[BF15]{Bayer-Fluckinger}
Eva Bayer-Fluckiger.
\newblock Isometries of quadratic spaces.
\newblock {\em J. Eur. Math. Soc. (JEMS)}, 17(7):1629--1656, 2015.

\bibitem[CG74]{ChenGreenberg}
S.~S. Chen and L.~Greenberg.
\newblock Hyperbolic spaces.
\newblock In {\em Contributions to analysis (a collection of papers dedicated to {L}ipman {B}ers)}, pages 49--87. Academic Press, New York-London, 1974.

\bibitem[CHLR08]{CHLR}
T.~Chinburg, E.~Hamilton, D.~D. Long, and A.~W. Reid.
\newblock Geodesics and commensurability classes of arithmetic hyperbolic 3-manifolds.
\newblock {\em Duke Math. J.}, 145(1):25--44, 2008.

\bibitem[CM10]{MR2575371}
Christos Christopoulos and James McKee.
\newblock Galois theory of {S}alem polynomials.
\newblock {\em Math. Proc. Cambridge Philos. Soc.}, 148(1):47--54, 2010.

\bibitem[Dub16]{MR3562700}
Art\=uras Dubickas.
\newblock Salem numbers as {M}ahler measures of nonreciprocal units.
\newblock {\em Acta Arith.}, 176(1):81--88, 2016.

\bibitem[ERT19]{EmeryRatcliffeTschantz}
Vincent Emery, John~G. Ratcliffe, and Steven~T. Tschantz.
\newblock Salem numbers and arithmetic hyperbolic groups.
\newblock {\em Trans. Amer. Math. Soc.}, 372(1):329--355, 2019.

\bibitem[Gar12]{Garibaldi}
Skip Garibaldi.
\newblock Outer automorphisms of algebraic groups and determining groups by their maximal tori.
\newblock {\em Michigan Math. J.}, 61(2):227--237, 2012.

\bibitem[Lam05]{Lambook}
T.~Y. Lam.
\newblock {\em Introduction to quadratic forms over fields}, volume~67 of {\em Graduate Studies in Mathematics}.
\newblock American Mathematical Society, Providence, RI, 2005.

\bibitem[Mac11]{Mac11}
Colin Maclachlan.
\newblock Commensurability classes of discrete arithmetic hyperbolic groups.
\newblock {\em Groups, Geometry, and Dynamics}, 5(4):767--785, 2011.

\bibitem[MR03]{MRBook}
Colin Maclachlan and Alan~W. Reid.
\newblock {\em The arithmetic of hyperbolic 3-manifolds}, volume 219 of {\em Graduate Texts in Mathematics}.
\newblock Springer-Verlag, New York, 2003.

\bibitem[O'M00]{Omeara}
O.~Timothy O'Meara.
\newblock {\em Introduction to quadratic forms}.
\newblock Classics in Mathematics. Springer-Verlag, Berlin, 2000.
\newblock Reprint of the 1973 edition.

\bibitem[PR09]{PrasadRapinchuk}
Gopal Prasad and Andrei~S. Rapinchuk.
\newblock Weakly commensurable arithmetic groups and isospectral locally symmetric spaces.
\newblock {\em Publ. Math. Inst. Hautes \'Etudes Sci.}, 109:113--184, 2009.

\bibitem[Rei92]{ReidLengths}
Alan~W. Reid.
\newblock Isospectrality and commensurability of arithmetic hyperbolic {$2$}- and {$3$}-manifolds.
\newblock {\em Duke Math. J.}, 65(2):215--228, 1992.

\bibitem[Tsc26]{Chebotarev}
N.~Tschebotareff.
\newblock Die {B}estimmung der {D}ichtigkeit einer {M}enge von {P}rimzahlen, welche zu einer gegebenen {S}ubstitutionsklasse geh\"oren.
\newblock {\em Math. Ann.}, 95(1):191--228, 1926.

\end{thebibliography}

\end{document}